\title{Obstructions for normally spanned sets of vertices}
\author{Nicola Lorenz}
\author{Max Pitz}
\address{Universit\"at Hamburg, Department of Mathematics, Bundesstrasse 55 (Geomatikum), 20146 Hamburg, Germany}
\email{\{nicola.lorenz, max.pitz\}@uni-hamburg.de}
\colorlet{darkishRed}{red!80!black}
\colorlet{darkishBlue}{blue!60!black}
\colorlet{darkishGreen}{green!60!black}
\let\polishlcross=\l
\def\l{\ifmmode\ell\else\polishlcross\fi}
\let\emptyset=\varnothing
\let\theta=\vartheta
\let\rho=\varrho
\let\phi=\varphi
\def\NN{\mathbb N}
\def\cV{{\mathcal V}}
\newcommand{\set}[2]{{\{ #1 : #2 \}}}
\newcommand{\singleset}[1]{{\{#1\}}}
\newcommand{\cardinality}[1]{{\left\lvert {#1} \right\rvert}}
\def\downcl#1{\lceil{#1}\rceil}
\def\upcl#1{\lfloor{#1}\rfloor}
\renewcommand{\subset}{\subseteq}
\renewcommand{\triangleleft}{\vartriangleleft}
\newcommand{\nottriangleleft}{\not\kern-1pt\mathrel{\triangleleft}}
\theoremstyle{plain}
\newtheorem{thm}{Theorem}[section]
\newtheorem{lemma}[thm]{Lemma}
\theoremstyle{definition}
\DeclareMathOperator{\cf}{cf}
\DeclareMathOperator{\height}{height}
\newcommand{\itemlabel}[2]{\item[#2]\begingroup\def\@currentlabel{#2}\phantomsection\label{#1}\endgroup}
\begin{document}

\begin{abstract}
Halin conjectured that a graph has a normal spanning tree if and
only if every minor of it has countable colouring number. This has recently been proven by the second author. 

In this paper, we strengthen this result by establishing the following local version of it: Given a prescribed set of vertices $U$ in a connected graph $G$, there is a normal tree in $G$ that includes $U$ if and only if every $U$-rooted minor of $G$ (i.e.\ a minor every branch set of which meets $U$) has countable colouring number. 

Our proof relies on a novel approach that combines normal partition trees as introduced by Brochet and Diestel with a suitable closure argument developed by Robertson, Seymour and Thomas in their discussion of infinite graphs of finite tree width.
\end{abstract}

\keywords{Normal spanning tree, colouring number}


\subjclass[2020]{05C63, 05C69, 05C83}

\vspace*{24pt} 

\maketitle

\section{Introduction}

A rooted spanning tree $T$ of a connected graph $G$ is \textbf{normal} if the endvertices of all edges of $G$ are comparable in the tree order on $T$. In other words, we require that all the edges of $G$ run `parallel' to branches of $T$, but never `across'.
Every finite connected graph has a normal spanning tree (also known as a depth-first search tree) and so do all countable connected graphs. However, not all graphs have normal spanning trees: consider, for example, any uncountable clique.

Normal spanning trees are among the most useful tools for dealing with infinite graphs. They play a crucial role in the Robertson-Seymour-Thomas characterisation \cite{robertson1992excluding} of graphs without subdivided infinite clique in terms of tree-decompositions of finite width; see in particular \cite{diestel1994depth} and the references therein.
This correspondence is used, for example, in Berger's result that graphs without subdivided infinite clique have an unfriendly partition \cite{berger2017unfriendly,Niel_unfriendly}.
For further applications of normal spanning trees, see e.g.\ \cite{diestel1996classification, diestel1992proof, diestel2006end}.

Having observed that the property of having a normal spanning tree is closed under taking (connected) minors, Halin \cite{Halin} raised the question to characterise these forbidden minors. 
Following Erd\H{o}s and Hajnal \cite{erdHos1966chromatic}, a graph has \textbf{countable colouring number} if one can well-order its vertices so that every vertex is preceded by only finitely many of its neighbours. 
In 2021, the second author answered Halin's question as follows:

\begin{thm}[{\cite{Pitz}}]
\label{thm_main1}
   A connected graph has a normal spanning tree if and only if every minor of it has countable colouring number.
\end{thm}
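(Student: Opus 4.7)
The plan is as follows. For the (easy) forward direction, suppose $G$ has a normal spanning tree $T$ with root $r$, and choose any well-ordering $\prec$ of $V(G)$ that refines the tree-order of $T$; for instance, enumerate the levels of $T$ in order, breaking ties within each level arbitrarily. The normality condition forces the $\prec$-earlier neighbours of every vertex $v$ to lie among the proper ancestors of $v$ in $T$, which are finite in number, so $G$ itself has countable colouring number. Combined with Halin's observation that having a normal spanning tree is preserved under taking connected minors, the conclusion for every minor follows at once.

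The substance is in the reverse direction, which I would establish by contraposition: assuming $G$ admits no normal spanning tree, I would exhibit a minor of $G$ with uncountable colouring number. The strategy is first to attempt to build a normal tree in $G$ by transfinite recursion, extending a current normal tree $T_\alpha$ at successor stages by a single vertex chosen so that normality is preserved, and taking unions at limits. Since $G$ has no normal spanning tree, at some stage $\alpha$ there must be a component $C$ of $G - V(T_\alpha)$ whose neighbourhood in $T_\alpha$ fails to lie along a single ray. This concrete local obstruction is the starting point for the minor construction.

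Inside such a recalcitrant piece, and its attachment to $T_\alpha$, I would invoke the normal partition trees of Brochet and Diestel to obtain a tree-like skeleton organising $C$, and then apply a Robertson-Seymour-Thomas-style closure argument, of the sort they developed for graphs of finite tree-width, to inflate carefully-chosen \emph{seed} vertices along this skeleton into an uncountable family of pairwise disjoint connected branch sets $(B_x)_{x \in X}$. The edge structure between these branch sets should witness uncountable colouring number of the contracted minor: for every well-ordering of $X$, each $B_x$ is joined by an edge of $G$ to uncountably many $\prec$-earlier branch sets, so in any colouring attempt the colour of $B_x$ is constrained by uncountably many predecessors.

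The main obstacle is this last construction: simultaneously arranging that the $B_x$ are disjoint, that each one is internally connected, and that the inter-branch edges are sufficiently dense to force uncountable colouring number is a delicate balancing act. The Brochet-Diestel partition tree supplies the global scaffolding along which to distribute the $B_x$, while the Robertson-Seymour-Thomas closure supplies the internal paths that make each $B_x$ connected by absorbing a growing sequence of separators. Interleaving these two mechanisms so that neither spoils the other, and verifying that the resulting family is genuinely uncountable and not merely countably deep, is where I expect the bulk of the technical work to lie.
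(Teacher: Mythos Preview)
Your forward direction is correct and essentially matches the paper's argument in Section~\ref{sec_easy}.

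The backward direction, however, contains a genuine error. You claim that if the greedy construction of a normal tree gets stuck, then at some stage there is a component $C$ of $G - V(T_\alpha)$ whose neighbourhood in $T_\alpha$ ``fails to lie along a single ray.'' But this never happens: it is a basic property of any normal tree $T_\alpha$ that $N(C)$ is a chain in $T_\alpha$ for every component $C$ of $G - T_\alpha$ (see Lemma~\ref{lem_normalsep} and the remark preceding it), and a chain always lies on a single branch. The actual obstruction to extending $T_\alpha$ into $C$ is that $N(C)$ may be an \emph{infinite} chain with no maximal element, so that no vertex of $C$ can be attached above all of $N(C)$ while keeping $T_\alpha$ a graph-theoretic tree. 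From this correct obstruction it is far from clear how to extract a minor of uncountable colouring number: a single infinite chain of attachments is entirely compatible with $G$ being countable, hence with every minor having countable colouring number. Something global --- uncountably many such obstructions, interacting in a controlled way to produce a barricade, an $\omega_1$-graph or an Aronszajn-tree graph --- is needed, and your sketch does not supply it beyond naming the toolboxes.

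More broadly, your approach is contrapositive (manufacture a bad minor from the failure of an NST), whereas the paper's is direct. The paper assumes every minor has countable colouring number and \emph{uses} this hypothesis structurally: it builds a slim normal (semi\nobreakdash-)partition tree (Theorem~\ref{3.4}), shows via the concrete obstructions \ref{barricades}--\ref{AT-graphs} that this tree has only countable branches and can be written as a continuous chain of small rooted subtrees with finite adhesion (the Closure Lemma~\ref{3.7} feeding into the Decomposition Lemma~\ref{3.6}), and then constructs the normal spanning tree by induction on $|T|$, gluing normal spanning trees of the pieces using well-connected adhesion sets (Lemma~\ref{lem_normalinsubgraphs}). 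The Brochet--Diestel and Robertson--Seymour--Thomas ingredients you mention do appear, but they serve to \emph{build} the normal spanning tree, not to exhibit a forbidden minor.
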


Then the forbidden subgraphs for countable colouring number found by Bowler, Carmesin, Komj\'ath and Reiher (\cite{TheColouringNumberOfInfiniteGraphs}) form the forbidden minors for having a normal spanning tree. 

The concept of a normal spanning tree extends to subtrees which are not necessarily spanning as follows: a rooted tree $T$ in a graph $G$ is \textbf{normal} if for every $T$-path $P$ in $G$ (a path with both endvertices but no inner vertices or edges in $T$), the endvertices of $P$ are comparable in the tree order.
If $T$ is spanning, this reduces to the requirement that the ends of any edge of $G$ are comparable in the tree order on $T$.

The importance of normal trees (that are not necessarily spanning) arises from the fact that virtually all existence proofs for normal spanning trees proceed by building a (potentially transfinite) sequence of larger and larger normal subtrees until all vertices are incorporated and the final normal tree in the sequence is spanning. However, normal trees that cover large, prescribed sets of vertices are also important in their own right, especially in cases where the whole graph does not admit a normal spanning tree. Kurkofka, Melcher and Pitz have shown that all graphs can be approximated by normal trees up to an arbitrarily prescribed error term, an assertion which has a number of implications about the topological structure of infinite graphs \cite{kurkofka2021approximating}. In \cite{burger2022end}, normal trees play an important role in finding end-faithful spanning trees in connected graphs without topological $T_{\aleph_1}$ minors, and in the star-comb series \cite{StarsAndCombs1,StarsAndCombs2,StarsAndCombs3,StarsAndCombs4}, normal trees occur frequently as duals to certain forbidden structures.

At the heart of these applications lies the question for which prescribed sets of vertices $U$ there exists a normal tree $T$ in $G$ with $U \subseteq V(T)$. If the answer is yes, then we say that $U$ is \textbf{normally spanned} in $G$, and the question arises whether one can find a local version of Theorem~\ref{thm_main1} above characterising which sets of vertices are normally spanned.

To answer this question, we need a concept of how minors of $G$ interact with a prescribed set of vertices $U$: A minor of $G$ is said to be \textbf{$U$-rooted} if every branch set contains a vertex of $U$. Rooted minors have been studied, for example, in \cite{RootedMinors}, \cite{RootedMinorProblemsKen} and \cite{RootedMinorsPaulWollan}. If a set of vertices $U$ is normally spanned in a graph $G$, then a routine argument (see Section~\ref{sec_easy} below) shows that every $U$-rooted, connected minor of $G$ admits a normal spanning tree, and thus has countable colouring number (enumerate the vertices of the normal tree level by level).

The main result of this paper is a converse to this observation, establishing the following local version of Theorem~\ref{thm_main1}:

\begin{thm}
\label{thm_main2}
    A set of vertices $U$ of a connected graph $G$ is normally spanned if and only if every $U$-rooted minor of $G$ has countable colouring number.
\end{thm}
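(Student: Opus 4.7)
The easy direction can be handled by projecting a level-order enumeration of a normal tree $T$ containing $U$ to an enumeration of any $U$-rooted minor $H$ (since every branch set meets $U \subseteq V(T)$) and verifying that each minor vertex has only finitely many earlier neighbours; this is routine, as indicated in the introduction.

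For the hard direction, my plan is to build a normal partition tree $(T,(V_t)_{t \in T})$ of $G$ --- in the sense of Brochet and Diestel --- such that every partition class $V_t$ is connected, countable, and meets $U$. Once such a partition tree is at hand, each induced subgraph $G[V_t]$ is countable and connected, hence admits a normal spanning tree $T_t$ by the classical countable case of Halin's theorem. These trees $T_t$ can then be glued along $T$ (attaching the root of each $T_{t'}$ to a suitable vertex of $T_t$ whenever $t'$ is a child of $t$) to yield a normal tree $T^*$ in $G$ whose vertex set contains $\bigcup_t (V_t \cap U) \supseteq U$.

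To construct the partition tree, I would proceed by transfinite recursion: enumerate $U$ in a well-order and, at each stage, extend a partial normal partition tree by a new partition class containing the first as yet uncovered $U$-vertex $u$. The new class is built by a Robertson--Seymour--Thomas-style closure process, iteratively absorbing connected `pendants' of $G$ minus the partition-tree-so-far that are attached through only finitely many previously-constructed partition classes. Closure under this operation produces a connected set containing $u$ that is guaranteed to integrate correctly with the existing normal partition tree structure (no edges of $G$ will run `across' the tree).

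The main difficulty, and where the hypothesis enters crucially, is showing that this closure terminates after countably many iterations and produces a countable partition class. If it did not, then from the uncountably many pendants absorbed during the closure one could read off a $U$-rooted minor of $G$ in which some vertex has uncountably many neighbours, contradicting the hypothesis that every $U$-rooted minor has countable colouring number. Setting up this contradiction requires care: the branch sets of the witnessing minor must all be connected and must each meet $U$, which forces us to contract pendants together with suitable connecting paths running through $U$-vertices. Arranging this is precisely where the interplay of the Brochet--Diestel partition tree structure with the RST closure argument is needed.
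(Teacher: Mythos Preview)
Your forward-direction sketch is acceptable in spirit (though the paper routes it through Jung's theorem, projecting dispersed sets to the minor, rather than a direct level enumeration).

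The hard direction has two genuine gaps.

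\textbf{The gluing step fails.} Even granted a normal (semi-)partition tree $(T,(V_t))$ with every $V_t$ countable, attaching normal spanning trees $T_t$ of the parts $G[V_t]$ along $T$ does not in general yield a normal tree in $G$. The obstruction appears at any limit node $t$: by the $T$-graph condition $V_t$ sends edges cofinally into $\{V_s : s < t\}$, yet you must attach $T_t$ via a single edge to a vertex lying in one fixed $V_{s_0}$. An edge from $V_t$ to some $V_s$ with $s_0 < s < t$ then lands in a portion of $T^*$ that is typically incomparable with the attachment point, so its endpoints are incomparable in $T^*$. This is precisely why the paper does \emph{not} glue along the parts. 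Instead it runs an induction on $|T|$: the Decomposition Lemma writes $T$ as a continuous chain of smaller rooted subtrees $T_i$ with $G(T_i)$ of finite adhesion towards $U$, and the construction of $(T,\cV)$ is arranged so that every such $G(T_i)$ has \emph{well-connected adhesion sets}, whence a tree normal in $G(T_i)$ is automatically normal in $G$ (Lemma~\ref{lem_normalinsubgraphs}). The inductively obtained normal spanning trees of the $G(T_i)$ can then be extended one component at a time, because finite adhesion towards $U$ guarantees each new component hangs below a single maximal vertex of the tree built so far. Your plan has no substitute for this mechanism.

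\textbf{The closure contradiction is misidentified.} A $U$-rooted minor ``in which some vertex has uncountably many neighbours'' does not contradict countable colouring number --- any star $K_{1,\kappa}$ has colouring number~$2$. The obstructions one must actually exhibit are barricades, $\omega_1$-graphs and Aronszajn-tree graphs, and extracting these as $U$-rooted minors with countable branch sets (in particular ensuring that \emph{both} sides of the barricade are $U$-rooted, cf.\ Lemma~\ref{barricade}) is the content of the paper's Closure Lemma~\ref{3.7}. This is considerably more delicate than an uncountable-degree observation and is where the hypothesis is really used.
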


The proof generally follows the approach to the proof of Theorem~\ref{thm_main1} from \cite{Halin}, with one crucial change that results in a stronger, yet significantly shorter overall proof. Proceeding by induction on $|G|$ (resp.~$|U|$), one would like to leverage the existence of normal trees of connected subgraphs $H \subset G$ with $|H| < |G|$. However, the problem is that a tree that is normal within some subgraph $H$ is not necessarily normal within all of $G$. Generalising an idea from \cite{QuicklyProvingDiestelsNSTcriterion}, we identify a sufficient condition (what we call below \emph{well-connected adhesion sets}) for a subgraph $H$ so that every normal subtree of $H$ is normal in $G$, see Lemma~\ref{lem_normalinsubgraphs} below. We then use a relaxed version of the \emph{normal partition trees} introduced by Brochet and Diestel \cite{NormalTreeOrdersForInfiniteGraphs}, and combine it with a countable closure argument developed by Robertson, Seymour and Thomas \cite[Assertion (2.2)]{robertson1992excluding} in their discussion of tree decompositions of finite width in order to find a suitable decomposition of $G$ into subgraphs $H$ all having  \emph{well-connected adhesion sets}.

\subsection{Organisation of this paper}

In Section~\ref{sec_prelims}, we recall all facts about normal trees needed in this paper and then prove that our condition in Theorem~\ref{thm_main2} for characterising normally spanned sets of vertices is necessary.

The rest of the paper is then concerned with the converse implication, that our condition is also sufficient.
Section~\ref{sec_wcas} introduces our crucial notion of \emph{well-connected adhesion sets} and describes its relation to normal trees. We then demonstrate how to build subgraphs with well-connected adhesion sets by a countable closure argument. 

In Section~\ref{sec_seminpt} we introduce our mechanism for structuring infinite graphs, introducing the concept of \emph{$T$-graphs} and \emph{normal (semi-) partition trees}. This section concludes with our main existence result for normal partition trees tailored around the prescribed set of vertices $U$ and incorporating the property of having \emph{well-connected adhesion sets} in $G$. 

Section~\ref{sec_declemma} establishes a decomposition lemma, essentially saying that our graph $G$ can be written as a continuous chain of certain, well-behaved subgraphs $H_i$ with $|H_i| < |G|$, facilitating an inductive proof. 
Finally, Section~\ref{sec_finalproof} brings all the previous ingredients together and completes the proof of our forbidden minor characterisation, as stated in Theorem~\ref{thm_main2}.

\section{The forwards implication}
\label{sec_prelims}

In this section, we recall well-known facts about normal trees that allow us to complete the proof of the necessity of our characterisation in Theorem~\ref{thm_main2}.

\subsection{Background on normal trees}

If $T$ is a graph-theoretic tree with root~$r$, we write $x \le y$
for vertices $x,y\in T$ if $x$ lies on the unique $r$--$y$ path in~$T$.
A rooted tree $T \subset  G$ \emph{contains a set of vertices $U$} if $U \subset V(T)$. For a node $t$ of the rooted tree $T$, we write $\lceil t \rceil = \lceil t \rceil_T := \{t' \in T \colon t'\le t\}$ for its \textbf{down-closure}, the set of vertices in $T$ that lie on the unique path from $t$ to the root $r$.

A rooted tree $T \subset G$ is \textbf{normal (in $G$)} if the end vertices of any $T$-path in $G$ (a path in $G$ with end vertices in $T$ but all edges and inner vertices outside of $T$) are comparable in the tree order of $T$.  For a normal tree $T\subset G$, the neighbourhood $N(D)$ of every component $D$ of $G-T$ forms a chain in $T$. For later use, we recall the following lemma, see also \cite[\S1.5 and \S8.2]{Bibel}:

\begin{lemma}
\label{lem_normalsep}
    Let $T$ be a normal tree in a graph $G$. Then for any two incomparable vertices $t,t'$ in $T$, the finite set $\downcl{t} \cap \downcl{t'}$ separates $t$ from $t'$ in $G$. \hfill \qed
\end{lemma}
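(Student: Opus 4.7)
The plan is to set $m := t \wedge t'$, the infimum of $t$ and $t'$ in the tree order, and argue that $\downcl{t} \cap \downcl{t'} = \downcl{m}$. This set is finite because it is contained in the (finite) unique $r$--$m$ path in $T$, where $r$ is the root. Since $t$ and $t'$ are incomparable, $m$ lies strictly below both, so neither $t$ nor $t'$ lies in $\downcl{m}$. Thus the bulk of the work is to show that every $t$--$t'$ path $P$ in $G$ meets $\downcl{m}$.

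First, I would enumerate the vertices of $V(P) \cap V(T)$ in the order they appear along $P$, say $t = v_0, v_1, \ldots, v_k = t'$, and check that consecutive vertices $v_i, v_{i+1}$ are comparable in the tree order of $T$. Let $P_i$ denote the subpath of $P$ between $v_i$ and $v_{i+1}$. By choice of the $v_j$'s, the inner vertices of $P_i$ lie outside $T$, so every edge of $P_i$ has at least one endvertex outside $T$ and therefore also lies outside $T$ --- except possibly when $P_i$ is a single edge of $T$, in which case $v_i, v_{i+1}$ are adjacent in $T$ and hence comparable. In all other cases, $P_i$ is a $T$-path in $G$, and normality of $T$ gives the comparability of $v_i$ and $v_{i+1}$.

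The main step is then a short tree-combinatorial observation. For each $i$, let $[v_i, v_{i+1}]$ denote the unique $v_i$--$v_{i+1}$ path in $T$; by comparability, this is a chain between $\min(v_i, v_{i+1})$ and $\max(v_i, v_{i+1})$ (with respect to $\le$). Concatenating these chains yields a walk in $T$ from $t$ to $t'$. Since $T$ is a tree, every such walk contains the unique $t$--$t'$ path in $T$ as a sub-walk (removing any edge of the tree path separates $t$ from $t'$, so every walk traverses it). This tree path passes through the meet $m$, so $m$ lies on some segment $[v_i, v_{i+1}]$. Consequently $\min(v_i, v_{i+1}) \le m$, which means $\min(v_i, v_{i+1}) \in \downcl{m}$; as this vertex is a vertex of $P$, we conclude $V(P) \cap \downcl{m} \ne \emptyset$, as required.

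I expect the conceptually non-trivial step to be the observation that stitching together tree paths between consecutive $T$-vertices of $P$ forces the resulting walk (and hence $P$) to meet the down-closure of the meet $m$; the preliminary verification that consecutive $v_i, v_{i+1}$ are comparable is a routine case distinction once the definition of a $T$-path is unpacked carefully, with the only minor subtlety being the degenerate case when $P_i$ is a single tree edge.
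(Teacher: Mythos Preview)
The paper does not actually prove this lemma: it is stated with a \qed and a reference to \cite[\S1.5 and \S8.2]{Bibel} as a standard fact. Your proof is correct and is essentially the classical argument; it also mirrors the proof the paper does give for the analogous separation property of normal semi-partition trees in Lemma~\ref{lem_Tgraphproperties}(1), where a minimal sequence of pairwise comparable nodes along $P$ is shown to have a ``valley'' shape $t_1 > \cdots > t_k < \cdots < t_n$, forcing the bottom node $t_k$ into $\downcl{t} \cap \downcl{t'}$. Your walk-through-the-tree formulation is an equivalent packaging of the same idea.
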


A set of vertices $U \subset V(G)$ is \textbf{dispersed} (in $G$) if every ray in $G$ can be separated from $U$ by a finite set of vertices. 
The following theorem of Jung \cite[Satz~6]{Jung} characterises the normally spanned sets of vertices. See also \cite{pitz2020unified} for a modern treatment.

\begin{thm}[Jung]
\label{thm_Jung}
    Let $G$ be a connected graph and $r$ any vertex of $G$. Then a set of vertices $U \subset V(G)$ is contained in some normal tree of $G$ with root $r$ if and only if $U$ is a countable union of dispersed sets in $G$. \hfill \qed
\end{thm}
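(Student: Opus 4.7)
I would prove both directions by exploiting the level structure of normal trees, in the spirit of the treatment in \cite{pitz2020unified}.

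For the forward direction, assume $T$ is a normal tree of $G$ rooted at $r$ with $U \subseteq V(T)$. Since each down-closure $\lceil u \rceil$ is a finite $r$--$u$ path in $T$, the partition $U = \bigcup_{n \ge 1} U_n$ with $U_n := \{u \in U : |\lceil u \rceil| = n\}$ gives a countable decomposition of $U$ into antichains of $T$, so it suffices to show that each $U_n$ is dispersed. Given a ray $R$ in $G$, I would exploit how $R$ interacts with $T$: either a tail of $R$ lies in a single component $D$ of $G - T$, in which case $N(D)$ is a chain and meets the antichain $U_n$ in at most one vertex, or $R$ converges (via normality) to a unique end of $T$ corresponding to a maximal chain $C \subseteq T$, which again meets $U_n$ in at most one vertex. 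In either case, a finite separator of $R$ from $U_n$ is assembled from this exceptional vertex $u_0$ (if any) together with a short initial segment of $R$, applying \cref{lem_normalsep} to separate each remaining $u \in U_n \setminus \{u_0\}$ from $u_0$ via its finite separator $\lceil u \rceil \cap \lceil u_0 \rceil$ of size at most $n - 1$.

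For the backward direction, suppose $U = \bigcup_{n \in \NN} D_n$ with each $D_n$ dispersed in $G$. I would construct the desired normal tree $T$ by transfinite recursion as the union of a continuous increasing chain $(T_\alpha)_{\alpha < \kappa}$ of rooted normal subtrees of $G$, arranged so that every vertex of $U$ is eventually added. At a successor stage $\alpha + 1$, one picks a fresh $u \in U$ scheduled from some $D_n$, locates the component $D$ of $G - T_\alpha$ containing $u$, and attaches a $t^*$--$u$ path in $D \cup \{t^*\}$ below the maximum $t^*$ of the chain $N(D)$. The dispersion hypothesis is precisely what forces such a maximum to exist when it is needed: otherwise, an infinite ascending chain in $N(D)$ together with a disjoint system of paths in $D$ linking a vertex of $D \cap D_n$ to the chain vertices would, via a standard star-comb construction, yield a ray in $G$ from which $D_n$ could not be finitely separated.

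The main obstacle is organising the transfinite bookkeeping so that the ``maximum of $N(D)$'' condition actually holds at every successor stage while each $D_n$ is treated cofinally. Limit stages themselves are routine: an increasing union of normal subtrees rooted at the common vertex $r$ is again normal, since any $T_\lambda$-path has both endpoints in some $T_\beta$ with $\beta < \lambda$, where they are already comparable. The subtle point is that the extendability condition can deteriorate at limits, which is why dispersion must be applied repeatedly via a countable closure of the type developed in \cite[Assertion (2.2)]{robertson1992excluding}: between successive additions of $U$-vertices, one interleaves countably many auxiliary extensions that absorb the potentially problematic adhesion chains before the construction moves on to the next $D_n$.
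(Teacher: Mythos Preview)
The paper does not prove \Cref{thm_Jung}; it is cited from Jung~\cite{Jung} with a pointer to~\cite{pitz2020unified} and then used as a black box, so there is no in-paper argument to compare your sketch against.

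Judged on its own, the justification in your backward direction has a genuine gap. You assert that if the component $D\ni u$ of $G-T_\alpha$ has an infinite chain $N(D)$, then ``a disjoint system of paths in $D$ linking a vertex of $D\cap D_n$ to the chain vertices'' arises via star--comb and yields a ray not finitely separable from $D_n$. But star--comb applied to the $D$-neighbours of the chain produces a centre or spine with no guaranteed connection to the specific vertex $u$; there is no reason $u$ should have infinitely many internally disjoint paths to the chain. Concretely, let $G$ be the ray $r\,t_1t_2\cdots$ together with vertices $u,v,w_1,w_2,\ldots$ and edges $uv$, $vw_i$, $w_it_i$: the ray is a normal tree, the component $D=\{u,v,w_1,\ldots\}$ has $N(D)=\{t_i:i\ge 1\}$ with no maximum, yet $\{u\}$ \emph{is} dispersed, since the single vertex $v$ separates $u$ from every ray. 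So dispersion of $D_n$ does not by itself force $N(D)$ to have a maximum at an arbitrary stage. Your final paragraph rightly senses that more is needed, but the gesture towards a Robertson--Seymour--Thomas closure does not supply it (the example above is already countable and closed); what is actually required, as in~\cite{Jung} or~\cite{pitz2020unified}, is an explicit inductive invariant on the trees $T_\alpha$ strong enough to exclude such configurations, together with a proof that it survives limits --- and it is \emph{there}, not at the moment of attaching $u$, that dispersion does its real work.
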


In particular, if a set of vertices $U$ is normally spanned, then any vertex $r$ of $G$ may serve as the root of a normal tree containing $U$.

\subsection{Proof of the first implication}
\label{sec_easy}

We prove the direct implication in Theorem~\ref{thm_main2}. Let $U$ be a set of vertices in a graph $G$.
We claim that in the following sequence of assertions, every one of them implies the next. 

\begin{enumerate}
    \item $U$ is normally spanned in $G$,
    \item every connected $U$-rooted minor of $G$ has a normal spanning tree,
    \item every $U$-rooted minor of $G$ has countable colouring number,
    \item every $U$-rooted minor of $G$  with countable branch sets has countable colouring number.
\end{enumerate}

Indeed, for $(1) \Rightarrow (2)$, let $T$ be the normal tree in $G$ that covers $U$. By Jung's Theorem~\ref{thm_Jung}, we can write $U$ as a countable union of dispersed sets $\{U_n\colon n \in \NN\}$. Let $H$ be a connected $U$-rooted minor of $G$ and for every $n \in \NN$ let
$$
U'_n := \set{v \in V(H)}{X_v \cap U_n \neq \emptyset}.
$$

Then $V(H) = \bigcup_{n \in \NN} U'_n$. Hence, again by Jung's Theorem~\ref{thm_Jung}, we are done once we have shown that each $U'_n$ is dispersed in $H$. Thus, let $R'$ be a ray in $H$ and define the following connected induced subgraph $R^* := \set{ v \in V(G) }{ \exists w \in V(R'): v \in X_w }$ in $G$. Let $R \subseteq R^*$ be a ray in $G$ such that $R$ meets every branch set $X_w$ for $w \in R'$. Let $S \subseteq V(G)$ be the finite set of vertices that separates $U_n$ and $R$ in $G$. Next, define $S' = \set{v \in V(H)}{X_v \cap S \neq \emptyset}$. Since $S$ is finite, $S'$ is a finite set of vertices in $H$. We show that $S'$ separates $U'_n$ and $R'$ in $H$. Assume for a contradiction that there is an $U'_n$--$R'$ path $P'$ that does not meet $S'$. Consider $P^* := \set{v \in V(G)}{\exists w \in V(P'): v \in X_w}$ similarly as before. In $P^*$ we find an $U_n$--$R$ path $P$ that does not meet $S$ by construction. A contradiction.

Since $(3) \Rightarrow (4)$ is trivial, we remains to prove $(2) \Rightarrow (3)$. 
So let $H$ be any $U$-rooted minor of $G$. We need to define a well-order on $V(H)$ witnessing that $H$ has countable colouring number. By considering each component individually, we may assume that $H$ is connected; by (2), $H$ has a normal spanning tree $T$. Let $T^i$ be the $i$th level of $T$ and fix an arbitrary well-order $\leq_i$ of $T^i$ for all $i \in \NN$. Let $v, v' \in V(H)$ and let $i, i' \in \NN$ such that $v \in T^i$ and $v' \in T^{i'}$. We define $v \leq v'$ if and only if $i < i'$ or $i = i'$ and $v \leq_i v'$. This defines a well-order $(V(H), \leq)$ of $V(T) = V(H)$.

Next, consider an arbitrary vertex $v \in V(H)$ and find $i \in \NN$ such that $v \in T^i$. Then all neighbours $w$ of $v$ with $w \leq v$ must be contained in $\bigcup_{j < i} T^j$ by definition of $(V(H), \leq)$, so must belong to $\lceil v \rceil_T$ by the defining property of a normal spanning tree. Since $\lceil v \rceil_T$ is finite, we have shown that there are at most finitely many neighbours $w$ of $v$ with $w \leq v$. As $v$ was arbitrary, this shows that $H$ has countable colouring number.

\section{Well-connected adhesion sets and normal trees}
\label{sec_wcas}

For distinct vertices $v,w$ of $G$ we denote by $\kappa(v,w) = \kappa_G(v,w)$ the \textbf{connectivity between $v$ and $w$} in $G$, i.e.\  the largest size of a family of independent (i.e.\ pairwise internally-disjoint) $v-w$ paths. If $v$ and $w$ are non-adjacent, this is by Menger's theorem for infinite graphs  \cite[Proposition~8.4.1]{Bibel} equivalent to the minimal size of a $v-w$ separator in $G$. 

Suppose $H$ is an induced subgraph of a graph $G$. We call a set of vertices  $A \subseteq V(H)$ an \textbf{adhesion set} of $H$ in $G$ if there is a component $D$ of $G - H$ such that $A = N(D)$.

We say that a subgraph $H$ of $G$ has \textbf{well-connected adhesion sets} if the end vertices of any $H$-path in $G$ have infinite connectivity in $H$. Note that if $H$ is an induced subgraph, then the endvertices of any $H$-path lie in the same adhesion set.

\begin{lemma}
\label{lem_normalinsubgraphs}
    Let $H$ be a subgraph of $G$ with well-connected adhesion sets. Then any rooted subtree of $H$ that is normal in $H$ is also normal in $G$.
\end{lemma}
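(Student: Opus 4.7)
I would argue by contradiction. Suppose $T$ is a rooted subtree of $H$ that is normal in $H$ but not in $G$; then there is a $T$-path $P$ in $G$ whose endpoints $u,v$ are incomparable in the tree order on $T$. Because $T$ is normal inside $H$, Lemma~\ref{lem_normalsep} applied in $H$ tells us that the finite set $S:=\downcl{u}_T\cap\downcl{v}_T$ separates $u$ from $v$ in $H$. I record for later use that $u,v\notin S$ (since they are incomparable) and that no inner vertex of $P$ lies in $S$ (since the inner vertices of a $T$-path avoid $T$).

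The plan is then to shortcut $P$ inside $H$ and contradict the separation property of $S$. First, I decompose $P$ along $H$: write $P$ as an alternating concatenation of maximal subpaths lying in $H$ and maximal excursions through components of $G-H$. Each excursion together with its two endpoints in $V(H)$ is an $H$-path in $G$; its endpoints are inner vertices of $P$ and hence, by the above observation, do not lie in $S$. By the hypothesis that $H$ has well-connected adhesion sets, the two endpoints of every excursion have infinite connectivity in $H$, so Menger's theorem supplies infinitely many internally disjoint paths joining them in $H$. Since $S$ is finite, only finitely many of these paths can use a vertex of $S$ internally, and so for each excursion I can choose a replacement $H$-path whose interior avoids $S$.

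Concatenating these replacement paths with the unchanged $H$-segments of $P$ yields a $u$--$v$ walk in $H$ which, by construction, meets $S$ at most in $\{u,v\}$. Extracting an embedded $u$--$v$ path inside this walk gives a $u$--$v$ path in $H$ that avoids $S$ internally, contradicting that $S$ separates $u$ from $v$ in $H$.

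The key conceptual point — and the only part that really needs care — is realising that well-connected adhesion sets together with the finiteness of $S$ coming from Lemma~\ref{lem_normalsep} are precisely what is needed to reroute each excursion inside $H$ \emph{while avoiding a prescribed finite set}; all remaining bookkeeping reduces to the simple fact that inner vertices of the original $T$-path $P$ lie outside $T$, and therefore outside $S$.
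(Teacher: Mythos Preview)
Your argument is correct and follows essentially the same route as the paper: assume a $T$-path $P$ with incomparable endpoints, invoke Lemma~\ref{lem_normalsep} to obtain the finite separator $S=\downcl{u}\cap\downcl{v}$ in $H$, decompose $P$ into its $H$-segments and its $H$-path excursions, and use well-connected adhesion sets to replace each excursion by a path in $H$ avoiding $S$, yielding a $u$--$v$ connection in $H$ that contradicts the separating property of $S$. One small terminological slip: your ``replacement $H$-path'' is a path \emph{in} $H$, not an $H$-path in the technical sense, and an excursion could in principle start at $u$ or $v$ rather than at an inner vertex of $P$ --- but since you already recorded $u,v\notin S$, the conclusion that all excursion endpoints avoid $S$ remains valid.
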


\begin{proof}
    Let $T$ be a rooted subtree of $H$ that is normal in $H$.
    Suppose for a contradiction that $T$ is not normal in $G$; then the end-vertices $v$ and $w$ of some $T$-path $P$ in $G$ are incomparable in the tree order of $T$. Since $T$ is normal in $H$, we know by Lemma~\ref{lem_normalsep} that $X:= \lceil v \rceil_T \cap \lceil w \rceil_T$ is a finite separator of $v$ and $w$ in $H$. Now let $P_1,\ldots,P_k$ be the collection of $H$-paths included in $P$, where $P_i$ has end-vertices $v_i$ and $w_i$, say. Since $H$ has well-connected adhesion sets, it follows that for each $i$ there exists an $v_i-w_i$ path $S_i \subset H$ that avoids the finite set $X$. But then $P \cap H$ together with all $S_i$ forms a connected subgraph of $H$ that contains $v$ and $w$ but avoids $X$, a contradiction.
\end{proof}

The property of having well-connected adhesion sets can be achieved as follows; for later use, we also incorporate a connectivity requirement.

\begin{lemma}
\label{lem_wellconnected_new}
    Let $C$ be a set of vertices in a graph $G$ such that $G - C$ is connected. Then every set of vertices $W \supseteq C$ is included in a superset $\widehat{W} \supseteq W $ such that  
    
    \begin{enumerate}
        \item $|\hat{W}| \leq |W| + \aleph_0$,
        \item $G[\widehat{W}]$ has well-connected adhesion sets, and 
        \item $G[\widehat{W} \setminus C]$ is connected.
    \end{enumerate}
    
\end{lemma}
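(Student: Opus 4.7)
The plan is a countable closure argument in the spirit of Robertson, Seymour and Thomas. I would construct $\widehat{W}$ as a nested union $\bigcup_{n \in \NN} W_n$ starting from $W_0 := W$, where at each stage I add auxiliary paths and separators for each pair $u,v \in W_n$ in order to handle the three required properties simultaneously.

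For the construction, first fix once and for all, for each ordered pair $u,v$ of distinct vertices of $G$: an infinite family $\cP_{u,v}$ of independent $u$--$v$ paths in $G$ if $\kappa_G(u,v) = \infty$, and otherwise a finite $u$--$v$ separator $S_{u,v} \subseteq V(G) \setminus \{u,v\}$ in $G - uv$ (which exists by Menger's theorem, since $u,v$ are non-adjacent in $G - uv$ and $\kappa_{G - uv}(u,v) \leq \kappa_G(u,v) < \infty$); in addition, for pairs with $u,v \in V(G) \setminus C$, fix a $u$--$v$ path $Q_{u,v} \subseteq G - C$, which exists because $G - C$ is connected. Then let $W_{n+1}$ consist of $W_n$ together with the vertex sets of all these objects for pairs $u,v \in W_n$, with $Q_{u,v}$ only included when $u, v \in W_n \setminus C$. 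A straightforward cardinal count shows $|W_n| \leq |W| + \aleph_0$ by induction, and hence $|\widehat{W}| \leq |W| + \aleph_0$, establishing~(1).

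Part~(3) is then immediate: any two $u,v \in \widehat{W} \setminus C$ lie in some common $W_n \setminus C$, and $Q_{u,v} \subseteq W_{n+1} \setminus C$ provides a path inside $G[\widehat{W} \setminus C]$. For~(2), suppose that $u, v$ are the endvertices of a $G[\widehat{W}]$-path $P$ in $G$, and pick $n$ with $u, v \in W_n$. Observe that $P$ has length at least~$2$, because the edge $uv$ (if present) lies inside the \emph{induced} subgraph $G[\widehat{W}]$ and hence cannot be a $G[\widehat{W}]$-path itself. If $\kappa_G(u,v)$ were finite, then $S_{u,v} \subseteq \widehat{W}$ would meet every $u$--$v$ path in $G$ of length at least~$2$, and hence meet some inner vertex of $P$, contradicting the definition of a $G[\widehat{W}]$-path. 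Thus $\kappa_G(u,v) = \infty$, and then the countably many independent $u$--$v$ paths in $\cP_{u,v} \subseteq \widehat{W}$ witness $\kappa_{G[\widehat{W}]}(u,v) \geq \aleph_0$.

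The one subtlety worth flagging is the adjacent case of condition~(2): one must separate $u$ from $v$ in $G - uv$ rather than in $G$, relying on the observation that a $G[\widehat{W}]$-path necessarily has at least one inner vertex precisely because the subgraph $G[\widehat{W}]$ is induced. Without this the argument would break down for adjacent pairs of finite connectivity.
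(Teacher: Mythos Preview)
Your proof is correct and follows essentially the same countable closure argument as the paper. The only notable variation is that for pairs of finite connectivity you add a separator in $G - uv$ (which forces you to handle the adjacency subtlety you correctly flag), whereas the paper instead adds a \emph{maximal} family of independent $u$--$v$ paths and argues that any $G[\widehat{W}]$-path would be internally disjoint from that family and hence contradict its maximality---an argument that sidesteps the adjacent case entirely.
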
 

\begin{proof}
    We construct $\widehat{W}$ from $W$ by a countable closure argument. Define a sequence of sets of vertices 
    $$
    W_0 \subset W_1 \subset W_2 \subset \cdots
    $$
    with $|W_n| = |W|$ as follows: 
    Let $W_0 = W$, and supposing that $W_{2n}$ has already been defined, construct $W_{2n + 1}$ by adding for every pair $v, w \in W_{2n}$ with finite connectivity in $G$ (the vertex set of) a maximal family of independent $v - w$ paths in $G$ to $W_{2n}$, and for all remaining pairs some $\aleph_0$ many independent  $v - w$ paths in $G$ to $W_{2n}$. To construct $W_{2n + 2}$, pick an inclusion minimal tree $T_n$ with $W_{2n} \setminus C \subset T_n \subset G - C$, and let $W_{2n + 2} = W_{2n + 1} \cup T_n$. 
    
    Then $\widehat{W} = \bigcup_{n \in \NN} W_n$ is as desired: Clearly, $|\hat{W}| \leq |W| + \aleph_0$ by construction, so (1) holds. Any $\widehat{W}$-path from $v$ to $w$ witnesses that the connectivity of $v$ and $w$ was infinite, and so we have added infinitely many independent $v-w$ paths to $\widehat{W}$ in the process, giving (2). Furthermore, $G[\widehat{W} \setminus C] = \bigcup_{n \in \NN} T_n$ is connected, giving (3).
\end{proof}

\section{Normal semi-partition trees}
\label{sec_seminpt}

\subsection{Normal tree orders and $T$-graphs}

A partially ordered set $(T, \leq)$ is called an \textbf{order tree} if it has a unique minimal element (called the \textbf{root}) and all subsets of the form $\lceil t \rceil = \lceil t \rceil_T := \{t' \in T \colon t'\le t\}$ are well-ordered. Our earlier partial ordering on the vertex set of a rooted graph-theoretic tree is an order tree in this sense, where all $\lceil t \rceil$ are finite.

Let $T$ be an order tree. A~maximal chain in~$T$ is called a \textbf{branch} of~$T$; note that every branch inherits a well-ordering from~$T$. The \textbf{height} of~$T$ is the supremum of the order types of its branches. The \textbf{height} of a point $t\in T$ is the order type of~$\mathring{\lceil t \rceil}  := \lceil t \rceil  \setminus \{t\}$. The set $T^i$ of all points at height $i$ is the $i$th \textbf{level} of~$T$, and we write $T^{<i} := \bigcup\{T^j \colon j < i\}$. An \textbf{Aronszajn tree} is an uncountable tree with no uncountable levels and no uncountable branches.

The intuitive interpretation of a tree order as expressing height will also be used informally. For example, we say that $t$ is \textbf{above}~$t'$ if $t > t'$, and call $\lceil X \rceil = \lceil X \rceil _T := \bigcup \set{\lceil x \rceil}:{x \in X}$ the \textbf{down-closure} of~$X\subseteq T$. And we say that $X$ is \textbf{down-closed}, or $X$ is a \textbf{rooted subtree}, if $X=\lceil X \rceil $. If $t < t'$, we write $[t,t'] = \{x \colon t \leq x \leq t'\}$, and call this set a (closed) \textbf{interval} in~$T$. (Open and half-open intervals in~$T$ are defined analogously.) If $t < t'$ but there is no point between $t$ and~$t'$, we call $t'$ a \textbf{successor} of~$t$ and $t$ the \textbf{predecessor} of~$t'$; if $t$ is not a successor of any point it is called a \textbf{limit}.

Let $T$ be an order tree. A graph $G$ is a \textbf{$T$-graph} if the endvertices of all edges of $G$ are comparable in the tree order of $T$, and the set of lower neighbours of any point $t$ is cofinal in $\mathring{\lceil t \rceil}$. Note that all $T$-graphs are connected. For more information on $T$-graphs, see \cite{NormalTreeOrdersForInfiniteGraphs, pitz2024applications}.

\subsection{Normal partition trees}

We now recall the concept of a \textbf{normal partition tree} due to Brochet and Diestel \cite{NormalTreeOrdersForInfiniteGraphs}, a powerful tool to structure infinite graphs, see e.g.\ \cite{kurkofka2021representation,Pitz}.

Let $G$ be a graph, $T$ an order tree, and $\cV =  (\,V_t\colon t\in T\,)$ a family of non-empty \textbf{bags} $V_t \subseteq V(G)$ indexed by the nodes $t$ of $T$. The pair $(T, \cV)$ is called a  \textbf{normal partition tree} of~$G$ if

\begin{enumerate}[label=(\alph*)]
    \item\label{item_NPT0} $\cV$ forms a partition of $V(G)$,
    \item\label{item_NPT2} each \textbf{part} $G[V_t]$ is connected, and 
    \item\label{item_NPT1} the contraction minor $\dot{G}:=G/\cV$ (where we contract each $V_t \in \cV$ to a single vertex, with all arising parallel edges and loops deleted) is a $T$-graph
\end{enumerate}

Having a fixed normal partition tree $ (T,\cV)$ in mind, for a given vertex $v\in G$ we write $t(v)$ for the node $t\in T$ such that $v\in V_t$, which is well-defined by \ref{item_NPT0}.

A normal partition tree approximates $G$ best if the partition sets $V_t$ are small. To capture this intuition, we say that a normal partition tree $ (T,\cV)$ is \textbf{narrow} if $|V_t| \leq  \cf (\height (t))$, and it is \textbf{slim} if $|V_t|\leq |\height(t)| + \aleph_0$ for every $t\in T$.

A foundational result due to Brochet \& Diestel, which we will generalise below, says that every connected graph has a narrow normal partition tree, see \cite[Theorem~4.2]{NormalTreeOrdersForInfiniteGraphs}.

\subsection{Normal semi-partition trees}

Just as one can generalise the concept of a normal spanning tree to subtrees that are not necessarily spanning, we now generalise the concept of a normal partition tree to semi-partitions where the bags $\cV$ are still disjoint, but their union does not necessarily cover all of $V(G)$.  As before, let $G$ be a graph, $T$ an order tree, and $\cV =  (\,V_t\colon t\in T\,)$ a family of non-empty \textbf{bags} $V_t \subseteq V(G)$ indexed by the nodes $t$ of $T$. The pair $(T, \cV)$ is called a \textbf{normal semi-partition tree} of~$G$ if

\begin{enumerate}[label=(\alph*)]
    \item\label{item_SNPT0} the bags in $\cV$ are pairwise disjoint,
    \item\label{item_SNPT2} each \textbf{part} $G[V_t]$ is connected, 
    \item\label{item_SNPT1}\label{GPunkt} the contraction minor $\dot{G}:=G[\bigcup \cV]/\cV$ (where we delete all vertices outside of $\bigcup \cV$ and then contract each $V_t \in \cV$ to a single vertex) is a $T$-graph, and
    \item\label{item_SNPT3}\label{comparable} for every $\bigcup \cV$-path $P$ in $G$ with endvertices $u \in V_{t}$ and $v \in V_{t'}$, the nodes $t$ and $t'$ are comparable in the tree order of $T$.
\end{enumerate}

 A normal semi-partition tree $ (T,\cV)$ is \textbf{narrow} if $|V_t| \leq \cf (\height (t))$ for every $t\in T$, and it is \textbf{slim} if $|V_t|\leq \height(t) + \aleph_0$ for every $t\in T$.

Let $U \subseteq V(G)$ be a set of vertices in $G$. If for every $t \in T$ we have $V_t \cap U \neq \emptyset$, we say that the semi-partition tree $(T,\cV)$ is \textbf{$U$-rooted}.

Having a fixed normal semi-partition tree $ (T,\cV)$ in mind, for a given vertex $v\in \bigcup \cV$ we write $t(v)$ for the node $t\in T$ such that $v\in V_t$, which is well-defined by \ref{item_SNPT0}. For a set of nodes $S \subset T$, we write $V(S) = \bigcup_{t \in S} V_t$ and $G(S) = G[V(S)]$ for the subgraph of $G$ induced by $V(S)$.

We now record some elementary properties of normal semi-partition trees, which generalise the corresponding properties for normal partition trees of \cite[\S2]{NormalTreeOrdersForInfiniteGraphs}.

\begin{lemma}
\label{lem_Tgraphproperties}
    Let $G$ be a connected graph and $(T,\cV)$ be a normal semi-partition tree in $G$. Then:
    
    \begin{enumerate}
        \item For incomparable points $t, t' \in T$ the set $G(\lceil t \rceil \cap \lceil t' \rceil)$ separates $V_t$ from $V_{t'}$ in $G$.
        \item\label{itemT2} Every connected subgraph $H$ of $G$ that intersects with $G(T)$ there exists unique $T$-minimal element $t$ such that $V_t \cap H \neq \emptyset$.
        \item\label{itemT3} If $T' \subseteq T$ is down-closed, then every component of $G(T) - G(T')$ is spanned by a set $G(\lfloor t \rfloor)$ for some $t$ minimal in $T - T'$.
        \item\label{itemT4} If $T' \subseteq T$ is down-closed, then every component $C$ of $G - G(T')$ that meets $G(T)$ restricts to a component $C \cap G(T)$ of $G(T) - G(T')$.
    \end{enumerate}
    
\end{lemma}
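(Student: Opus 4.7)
The plan is to establish (1) first, from which (2)--(4) follow by relatively direct arguments. The main obstacle is the purely tree-theoretic statement embedded in (1): a walk in $T$ whose consecutive terms are comparable and whose endpoints $t, t'$ are incomparable must touch $\lceil t\rceil \cap \lceil t'\rceil$.

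For (1), I would take any path $P$ in $G$ from $V_t$ to $V_{t'}$ and decompose it into its maximal subpaths inside a single part $G[V_s]$ and its maximal $\bigcup\cV$-subpaths. By (c), consecutive bags visited by $P$ are joined by edges of $\dot G$, whose endpoints are comparable in $T$ by the $T$-graph property; by (d), the endpoints of each $\bigcup\cV$-subpath also lie in comparable bags. Projecting $P$ down to $T$ thus produces a walk $t = s_0, s_1, \ldots, s_n = t'$ with consecutive terms comparable. Let $m$ be the maximum of the well-ordered set $\lceil t\rceil \cap \lceil t'\rceil$, let $A$ be the set of $s \in T \setminus \lceil m\rceil$ whose meet with $t$ strictly exceeds $m$, and let $A'$ be the analogous set for $t'$. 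Then $t \in A$, $t' \in A'$, and $A \cap A' = \emptyset$ (a common witness $u$ would satisfy $u \leq t$ and $u \leq t'$, hence $u \leq m$). The heart of the argument is to verify that $A$ is closed under comparability inside $T \setminus \lceil m\rceil$: if $s \in A$ has witness $u > m$ and $s'$ is comparable to $s$ and lies outside $\lceil m\rceil$, then a short case analysis on whether $s' \geq s$ or $s' \leq s$ (and in the latter case whether $s' \geq u$ or $s' \leq u$, using that $\lceil s\rceil$ is a chain) confirms $s' \in A$. Since the walk from $t \in A$ cannot enter $A'$ without first leaving $T \setminus \lceil m\rceil$, some $s_i$ must lie in $\lceil m\rceil$, and the corresponding vertex of $P$ belongs to $G(\lceil t\rceil \cap \lceil t'\rceil)$.

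The remaining parts follow readily from (1). For (2), writing $S := \{s : V_s \cap H \neq \emptyset\}$, a minimum of $S$ exists by minimising within the well-ordered set $\lceil s_0\rceil \cap S$ for any $s_0 \in S$; uniqueness follows since two incomparable minimal $t, t' \in S$ would, via an $H$-path between $V_t \cap H$ and $V_{t'} \cap H$ combined with (1), force some $s < t, t'$ with $V_s \cap H \neq \emptyset$, contradicting minimality. For (3), given $t$ minimal in $T \setminus T'$, $G(\lfloor t\rfloor)$ is connected (each part is connected by (b), and in $\dot G$ the cofinality clause applied at each $s > t$ yields descending paths to $V_t$ through bags in $\lfloor t\rfloor$); it forms a full component of $G(T) - G(T')$ since any path escaping $G(\lfloor t\rfloor)$ within $G(T) - G(T')$ would reach some $V_s$ with $s$ incomparable to $t$ (the case $s < t$ being excluded by minimality) and hence, by (1), cross $G(\lceil t\rceil \cap \lceil s\rceil) \subseteq G(T')$, a contradiction. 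Conversely, every component $C$ of $G(T) - G(T')$ arises in this way by taking $t := \min(\lceil s\rceil \cap (T \setminus T'))$ for any $V_s$ meeting $C$. Finally, for (4), given a component $C$ of $G - G(T')$ meeting $G(T)$, pick $v \in C \cap G(T)$ and let $C'' = G(\lfloor t\rfloor)$ be its component in $G(T) - G(T')$ by (3); connectedness of $C''$ inside $G - G(T')$ gives $C'' \subseteq C$, while any $u \in (C \cap G(T)) \setminus C''$ would lie in $V_s$ with $s$ incomparable to $t$, so a $v$-$u$ path in $C$ would cross $G(T')$ by (1), contradicting $C \subseteq G - G(T')$.
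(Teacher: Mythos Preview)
Your overall strategy matches the paper's: project a $V_t$--$V_{t'}$ path to a finite sequence $t=s_0,\ldots,s_n=t'$ in $T$ with consecutive terms comparable (using (c) and (d)), then argue tree-theoretically that some $s_i$ lies in $\lceil t\rceil\cap\lceil t'\rceil$. Parts (2)--(4) are also essentially the paper's arguments.

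There is, however, a genuine (if small) gap in your tree-theoretic step for (1). You set $m:=\max(\lceil t\rceil\cap\lceil t'\rceil)$ and later use meets such as $\max(\lceil s\rceil\cap\lceil t\rceil)$, but in a general order tree these maxima need not exist: take a chain of order type $\omega$ with two incomparable limit points $t,t'$ placed directly above it; then $\lceil t\rceil\cap\lceil t'\rceil$ is that $\omega$-chain, which has no maximum. So $m$ (and hence $A,A'$ as you define them) may be undefined, and the argument as written does not go through. The idea can be repaired by working directly with the set $\lceil t\rceil\cap\lceil t'\rceil$ rather than its maximum, but this requires rewriting the closure argument.

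The paper avoids the issue entirely with a shorter device: pass to a \emph{minimal} sequence $t=t_1,\ldots,t_n=t'$ with consecutive terms comparable, and observe that a local peak $t_{i-1}<t_i>t_{i+1}$ is impossible, since then $t_{i-1},t_{i+1}\in\lceil t_i\rceil$ would be comparable and $t_i$ could be deleted. Hence the sequence has the valley shape $t_1>\cdots>t_k<\cdots<t_n$, and $t_k\in\lceil t\rceil\cap\lceil t'\rceil$ directly. This needs no maxima or meets and is both shorter and more robust than your partition argument.
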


\begin{proof}
    Let $G$ be a connected graph and $(T,\cV)$ be a normal semi-partition tree in $G$.
    
    For property $(1)$, let $t, t' \in T$ be two incomparable points. Then, $t, t' \notin \lceil t \rceil \cap \lceil t' \rceil$. Let $P$ be a $V_t$--$V_{t'}$ path in $G$. We show that $P$ meets $G(\lceil t \rceil \cap \lceil t' \rceil)$. Let $\tau := \{t \in T: V_t \cap P \neq \emptyset\}$. Let $t = t_1, \dots, t_n = t'$ be a minimal sequence of points in $\tau$ such that $t_i$ and $t_{i + 1}$ are comparable in the tree order for all $i < n$. Such a sequence exists since $\tau$ itself with the natural order obtained by \textit{following} $P$ is already such a sequence by property \ref{item_SNPT3}. We claim that the sequence has the following form:
    $$
    t = t_1 > \dots > t_k < \dots < t_n = t'.
    $$
    Indeed, if there is an $i$ such that $t_{i - 1} < t_i > t_{i + 1}$, then $t_{i - 1}, t_{i+1} \in \downcl{t_i}$ are comparable, so deleting $t_i$ yields a shorter sequence. Hence, $t_k \in \lceil t \rceil \cap \lceil t' \rceil$ and thus $P$ meets $G(\lceil t \rceil \cap \lceil t' \rceil)$.
    
    For property $(2)$, let $H$ be a connected subgraph of $G$ that intersects with $G(T)$. Suppose for a contradiction that there are two minimal elements $t$ and $t'$ such that $V_t \cap H \neq \emptyset \neq V_{t'} \cap H$. Since $H$ is connected, there is a $G(\lfloor t \rfloor)$--$G(\lfloor t' \rfloor)$ path $P$ in $H$. By (1), the path $P$ meets $G(\lceil t \rceil \cap \lceil t' \rceil)$, a contradiction to the minimality of $t$ and $t'$.

    For property $(3)$, let  $T' \subseteq T$ be down-closed and $C$ a component of $G(T) - G(T')$. We show that $C = G(\lfloor t \rfloor)$ for some point $t$ that is minimal in $T - T'$. Since $C \cap G(T) \neq \emptyset$, there exists by assertion $(2)$ a unique $T$-minimal element $t \in T$ such that $V_t \cap C \neq \emptyset$. In particular $t \notin T'$, as otherwise $C \cap G(T') \supseteq C \cap V_t \neq \emptyset$. Since the inclusion $C \subseteq G(\lfloor t \rfloor)$ is obvious by the uniqueness of $t$, it remains to show that $C \supseteq G(\lfloor t \rfloor)$. But this follows from the fact that  $G(\lfloor t \rfloor)$ is connected, disjoint from $G(T')$, and meets $C$; so is included in the component $C$ by maximality of $C$.

    For property $(4)$, consider a component $C$ of $G - G(T')$ that meets $G(T)$. By (2), there is a unique minimal element $t \in T$ such that $V_t \cap C \neq \emptyset$. By uniqueness of $t$, it follows $C \cap G(T) \subseteq G(\upcl{t})$. But conversely, $G(\upcl{t})$ spans a connected subset of $G(T) - G(T')$ by (3), so $G(\upcl{t}) \subseteq C$, which implies $C \cap G(T) = G(\upcl{t})$ as desired.
\end{proof}

\subsection{Existence of normal semi-partition trees}

We already know from \cite{NormalTreeOrdersForInfiniteGraphs} that every connected graph admits a narrow normal partition tree. By relaxing `narrow' to `slim', we can ensure that $G(T)$ and, in fact, $G(T')$ for all rooted subtrees $T'$ of $T$ have well-connected adhesion sets.

\begin{thm}
\label{3.4}
    Let $U \subseteq V(G)$ be a prescribed set of vertices in a connected graph $G$. Then there is a slim normal semi-partition tree $(T,\cV)$ of $G$ such that
    
    \begin{enumerate}
        \itemlabel{3.4.0}{$(1)$}$U \subset G(T)$,
        \itemlabel{3.4.1}{$(2)$} $T$ is $U$-rooted, 
        \itemlabel{3.4.2}{$(3)$} for each rooted subtree $T' \subset T$, the subgraph $G(T')$ has well-connected adhesion sets in $G$.
    \end{enumerate}
    
\end{thm}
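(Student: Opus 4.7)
The plan is to construct $(T,\cV)$ by transfinite recursion on tree height, extending Brochet and Diestel's construction of narrow normal partition trees \cite[Theorem~4.2]{NormalTreeOrdersForInfiniteGraphs} with two modifications: first, to obtain a $U$-rooted semi-partition tree, I would create new nodes only for working components meeting $U$, placing a vertex of $U$ in each bag; second, to obtain Property~\ref{3.4.2}, I would enlarge each bag through the closure procedure from Lemma~\ref{lem_wellconnected_new}.

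Concretely, after fixing a well-ordering of $U$, I recursively maintain for each built node $t$ a working component $D_t$ of $G - C_t$, where $C_t := \bigcup_{s < t} V_s$. To construct the bag at a new node $t$ with working component $D$, I pick the $U$-least element $u \in D \cap U$, set $C := N_G(D) \subseteq C_t$ (so that $G[D]$ viewed inside $G[D \cup C]$ is connected, satisfying the hypothesis of Lemma~\ref{lem_wellconnected_new}), and apply the lemma inside $G[D \cup C]$ to an initial set $W \supseteq C \cup \{u\}$ augmented by the few witness vertices from $D$ needed to ensure the cofinal-lower-neighbours requirement of the $T$-graph condition for $V_t$. Setting $V_t := \widehat{W} \setminus C$ gives a connected subset of $D$ of cardinality at most $|C| + \aleph_0 \le \height(t) + \aleph_0$ (by induction on $\height(t)$), containing $u$, and such that $G[\widehat{W}]$ has well-connected adhesion sets in $G[D \cup C]$. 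The semi-partition properties~\ref{item_SNPT0}--\ref{item_SNPT3} then follow by standard arguments, using that bags at incomparable nodes lie in distinct components of $G$ minus their common-ancestor down-closure.

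To verify Property~\ref{3.4.2}, I would prove by induction on $\height(t)$ that $G(\lceil t \rceil)$ has well-connected adhesion sets in $G$. Given a $G(\lceil t \rceil)$-path $P$ with endpoints $v \in V_{s_1}$ and $w \in V_{s_2}$ (with $s_1, s_2 \le t$), a case analysis applies. If both $s_1, s_2 < t$, then $s_1, s_2$ must be comparable (otherwise Lemma~\ref{lem_Tgraphproperties}(1) would force $P$'s interior to meet $G(\lceil s_1 \rceil \cap \lceil s_2 \rceil) \subseteq G(\lceil t \rceil)$); assuming $s_1 \le s_2 < t$, the path $P$ is in fact a $G(\lceil s_2 \rceil)$-path, so induction applies. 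Otherwise, at least one of $s_1, s_2$ equals $t$; a short argument using the separation properties of working components then shows that $P$ must lie in $G[D_t \cup N(D_t)]$ and qualify as a $\widehat{W}$-path there, so Lemma~\ref{lem_wellconnected_new}(2) delivers the required infinite connectivity of $v$ and $w$ in $G[\widehat{W}] \subseteq G(\lceil t \rceil)$. Property~\ref{3.4.2} for an arbitrary rooted subtree $T'$ then reduces to this base case: any $G(T')$-path has endpoints in bags indexed by comparable nodes $t_1 \le t_2$ with $t_2 \in T'$, so $\lceil t_2 \rceil \subseteq T'$ and $G(\lceil t_2 \rceil) \subseteq G(T')$.

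The main technical obstacle lies at the limit stage of the recursion: to continue a branch through a limit level $\alpha$, the nested chain of working components along that branch must have a common sub-component meeting $U$, and moreover admit cofinally many adjacencies to ancestor bags to furnish the $T$-graph cofinality condition for the new bag. This is the same difficulty faced by Brochet and Diestel in the narrow setting and can be handled with analogous arguments; indeed, slimness (rather than narrowness) provides the crucial additional room to absorb, via Lemma~\ref{lem_wellconnected_new}, both the full adhesion set $C$ and a cofinal witness family from $D$ into the initial set $W$ without violating the bag-size bound.
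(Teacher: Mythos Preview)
Your approach is correct but takes a genuinely different route from the paper's. You recurse on tree height in the Brochet--Diestel style, creating all nodes of a given level simultaneously and then confronting the limit-level difficulty you flag at the end. The paper instead recurses on an enumeration $\{u_i : i < \kappa\}$ of $U$: at step $i$, if $u_i$ is not yet covered, it locates the component $D \ni u_i$ of $G - G(T_i)$, identifies the chain $C \subset T_i$ to which $N(D)$ attaches, and creates a \emph{single} new node immediately above $C$ with bag produced by Lemma~\ref{lem_wellconnected_new} applied in $G[V(C) \cup D]$. Limits in this recursion are then just trivial unions $T_\ell = \bigcup_{i<\ell} T_i$, and limit-height nodes of $T$ arise as ordinary successor steps of the enumeration---so your ``main technical obstacle'' simply does not appear. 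The paper also verifies Property~\ref{3.4.2} by direct induction on the construction index $i$ (a $G(T')$-path either avoids the single new bag, whence the induction hypothesis applies, or meets it, whence it is a $\widehat{W}_D$-path in $G[V(C)\cup D]$), rather than first treating chains $\lceil t\rceil$ and then reducing general rooted subtrees to that case.

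What each approach buys: the paper's enumeration-driven construction is shorter and sidesteps the limit-level analysis entirely; your height-driven construction stays closer to \cite{NormalTreeOrdersForInfiniteGraphs} and makes the level structure of $T$ explicit, at the cost of having to justify the $T$-graph cofinality condition at limit heights. That justification does go through---any $U$-meeting component at a limit must attach unboundedly along its branch, since otherwise it would already have been a full component at an earlier stage and, not being selected, would not meet $U$---but it is work the paper's recursion avoids.
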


\begin{proof}
    Fix an enumeration $\set{u_i}{i < \kappa}$ of $U$.
    We construct the desired normal semi-partition tree $(T,\cV)$ recursively as an increasing union of down-closed subtrees $\set{T_i}{i \leq \kappa}$ of $T$ such that each $T_i$ satisfies $\{u_j \colon j < i \} \subseteq G(T_i)$ and properties \ref{3.4.1} and \ref{3.4.2}. In the end, $T = T_\kappa$ is as desired.
    
    We begin by setting $T_0 = \emptyset$. In the successor step $i \mapsto i+1$, suppose that $T_i \subset T$ has already been defined. If $u_i \in G(T_i)$, then put $T_{i+1} = T_i$. Otherwise, there is a unique component $D$ of $G - G(T_i)$ with $u_i \in V(D)$. Consider
    $$
    C := \lceil \set{t \in T_i}{ N(D) \cap V_t \neq \emptyset } \rceil \subset T_i.
    $$
    Since $T_i$ is normal, property \ref{comparable} implies that $C$ is a chain in $T_i$. For each vertex in $N(D) \subseteq G(C)$ pick one neighbour inside $D$, and let $Z_D$ be the set of all these neighbours. Let $W_D := V(C) \cup Z_D \cup \{u_i\}$. Then Lemma~\ref{lem_wellconnected_new} applied inside the graph $G_D = G[V(C) \cup D]$ with input $W_D \supseteq V(C)$  yields a set of vertices $\hat{W}_D \supseteq W_D$. We obtain $T_{i + 1}$ from $T_i$ by placing a fresh node $t$ immediately above $C$ (and incomparable to all other nodes of $T_i)$ with corresponding part $V_t := \hat{W}_D \setminus V(C)$.

    We verify that $T_{i + 1}$ is a normal semi-partition tree: By construction, property \ref{item_SNPT0} holds. Property \ref{GPunkt} follows from choice of $Z_D$ and property \ref{comparable} is straightforward. Since $V_t$ induces a connected subgraph by property (3) of Lemma~\ref{lem_wellconnected_new}, we get property \ref{item_SNPT2} for $T_{i+1}$. By construction, $T_{i + 1}$ is $U$-rooted, giving property \ref{3.4.1}, and by construction we have $\{u_j \colon j < i + 1\} \subseteq  G(T_{i + 1})$. 
    
    For property \ref{3.4.2}, let $T'$ be a rooted subtree of $T_{i+1}$, and consider an arbitrary $G(T')$-path $P$ in $G$. If both endvertices of $P$ belong to $G(T_i)$, then we are done by the induction assumption. Hence, we may assume that one of the end-points of $P$ belongs to $V_t$, and hence $t \in T'$. Since $G(T')$ is induced, $P$ has inner vertices, which must be contained in the component $D$ of $G - G(T_i)$. But then $P$ is a $\hat{W}_D$-path in $G_D$, and the assertion follows by choice of $\hat{W}_D \subset G(T')$, that is property (2) of Lemma~\ref{lem_wellconnected_new}.

    Finally, to see that $T_{i + 1}$ is slim, we must check that $|V_t| \leq |\height(t)| + \aleph_0$. First observe that  $\mathring{\lceil t \rceil} = C$ implies $|\height(t)| = |C|$. Moreover, we have $$|Z_D| \leq |N(D)| \leq |V(C)| \leq |C| + \aleph_0$$ where the last inequality follows since $T_i$ is slim by induction assumption. It follows
    $$
    |V_{t}| \leq |\hat{W}_D| \leq |W_D| + \aleph_0 = |V(C) \cup Z_D \cup \{u_i\}| + \aleph_0 \leq  |C| + \aleph_0 =  |\height(t)| + \aleph_0
    $$
    where the second inequality holds by property (1) of Lemma~\ref{lem_wellconnected_new}, and the last inequality follows from the previous equation.
    
    For limits $\l \leq \kappa$ define
    $$
    T_\l := \bigcup_{i < \l} T_i.
    $$
    By routine arguments, it follows that $T_\l$ is a slim $U$-rooted normal semi-partition tree of $G$ such that for all $j < \l$ every $u_j \in G(T_\l)$ satisfying \ref{3.4.1}. Now, we check property \ref{3.4.2}. Let $T'$ be an arbitrary rooted subtree of $T_{\l}$, and consider an arbitrary $G(T')$-path $P$ in $G$ with end-vertices say $v$ and $w$. Then there is $i < \ell$ with $v,w \in G(T_i)$, so $P$ is already a $G(T' \cap T_i)$ path. By the induction assumption, $v$ and $w$ have infinite connectivity inside $G(T' \cap T_i) \subset G(T')$ as desired.
\end{proof}

\section{The decomposition lemma}
\label{sec_declemma}

Let $H$ be an induced subgraph of a graph $G$, and let $U \subseteq V(G)$ be a prescribed set of vertices. Any component of $G-H$ that contains a vertex from $U$ is called a \textbf{$U$-component} of $G-H$. We say that $H$ has \textbf{finite adhesion in $G$ towards $U$} if every $U$-component of $G-H$ has only finite neighbourhood in $H$.

In this section, our main target is to prove the following decomposition lemma. 

\begin{lemma}[Decomposition lemma]
\label{3.6}
    Let $G$ be a connected graph. Let $U \subseteq V(G)$ be an uncountable set of vertices and suppose that every $U$-rooted minor of $G$ with countable branch sets has countable colouring number.
    
    Then any slim $U$-rooted normal semi-partition tree $(T,\cV)$ in $G$ with $U \subseteq G(T)$ can be written as a continuous, increasing union
    $$
    T = \bigcup_{i < \cf(\kappa)}T_i
    $$
    of infinite, $<|T|$-sized rooted subtrees $T_i$ such that all graphs $G(T_i)$ have finite adhesion in $G$ towards $U$.
\end{lemma}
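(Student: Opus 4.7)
The plan is to construct the chain $(T_i)_{i<\cf(\kappa)}$ in two stages, where $\kappa := |T|$. If $\kappa$ is countable, the statement holds trivially by taking $T_i = T$, so I assume $\kappa$ is uncountable. The idea is first to set up a crude continuous $\subseteq$-chain of rooted subtrees of $T$ of cardinality $<\kappa$ covering $T$, and then to refine each term so as to enforce finite adhesion towards $U$.

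For the first stage, since $(T, \cV)$ is slim, the down-closure in $T$ of any $\mu$-sized subset has size at most $\mu + \aleph_0$. Fixing a well-ordering of $T$ of order type $\kappa$ and a cofinal increasing sequence of infinite cardinals $(\mu_i)_{i < \cf(\kappa)}$ in $\kappa$, I let $S_i$ be the down-closure in $T$ of the first $\mu_i$ many elements. Then $|S_i| \leq \mu_i + \aleph_0 < \kappa$, the $S_i$ form a continuous increasing chain, and $\bigcup_i S_i = T$.

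The core technical tool for the second stage would be a \emph{closure lemma}: every rooted subtree $T^* \subseteq T$ with $\aleph_0 \leq |T^*| < \kappa$ extends to a rooted subtree $T^{**} \supseteq T^*$ with $|T^{**}| = |T^*|$ such that $G(T^{**})$ has finite adhesion in $G$ towards $U$. Given this, I define the chain recursively by $T_0 := S_0^{**}$, $T_{i+1} := (T_i \cup S_{i+1})^{**}$, and $T_\lambda := \bigcup_{j < \lambda} T_j$ at limits; continuity and the covering $T = \bigcup T_i$ are then immediate. Finite adhesion at successor stages is built in. At a limit $\lambda$, if a $U$-component $D$ of $G - G(T_\lambda)$ had infinite neighborhood, then by Lemma~\ref{lem_Tgraphproperties}\ref{itemT3}--\ref{itemT4}, $D \cap G(T) = G(\lfloor t_D \rfloor)$ for a unique $t_D$ minimal in $T - T_\lambda$, and its neighborhood in $G(T_\lambda)$ lies in $G(\mathring{\lceil t_D \rceil})$. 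By arranging the cofinalities of the $(\mu_i)$ appropriately, $\mathring{\lceil t_D \rceil}$ is already captured by some earlier $T_j$, at which point $t_D$ is minimal in $T - T_j$ and inherits the infinite adhesion from $T_\lambda$, contradicting finite adhesion of $G(T_j)$.

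The main obstacle is the closure lemma itself. Here the plan is to iterate a single-step absorption $\omega$ times: at step $n$, enumerate the $U$-components $D$ of $G - G(T^*_n)$ with infinite neighborhood (at most $|T^*|$ many, by cardinality), and for each such $D$ absorb $\lceil t_D \rceil$ into the subtree together with a countable closure in the spirit of Lemma~\ref{lem_wellconnected_new} to handle bag-level adhesion. Each step adds at most $|T^*|$ new nodes, so the size is preserved throughout. The termination after $\omega$ stages is where the forbidden-minor hypothesis enters: if the iteration failed to terminate, one would extract a descending tower of $U$-components with nested infinite adhesion sets, and then contracting each such component, together with the intermediate bags along the tower, into a single branch set containing a vertex of $U$ would yield a $U$-rooted minor with countable branch sets (using slimness of $(T,\cV)$ to keep the bags countable) whose colouring number is uncountable, witnessed by one of the Bowler--Carmesin--Komj\'ath--Reiher obstructions from \cite{TheColouringNumberOfInfiniteGraphs}, contradicting the hypothesis. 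I expect the delicate bookkeeping required to locate this obstruction configuration, and to verify that all its branch sets remain countable, to be the most technically involved step of the argument.
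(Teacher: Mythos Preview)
Your outline has the right shape---a closure lemma plus a recursive chain---but both components have genuine gaps that the paper resolves with different mechanisms.

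\textbf{The closure lemma.} Iterating only $\omega$ times is not enough. After $\omega$ steps there can still be a $U$-component $D$ of $G - G(T^*_\omega)$ whose infinite neighbourhood is spread cofinally across the $T^*_n$; this does not produce any usable ``descending tower'', since for each fixed $n$ the component of $G-G(T^*_n)$ containing $D$ may well have only finite neighbourhood. The paper instead iterates for $\omega_1$ steps (Lemma~\ref{3.7}): finite adhesion of the result then follows because a persistent bad component would force an uncountable chain $\{t_{D_i}\colon i_0 \le i<\omega_1\}$ in $T$, contradicting Lemma~\ref{lem_noOmega_1chains}. The size bound $|T'|=|X|$ is established level by level using the barricade obstruction (for the successor steps) and the Aronszajn-tree obstruction (for the full $\omega_1$-union when $|X|=\aleph_0$)---not ``by cardinality''. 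Your parenthetical ``at most $|T^*|$ many'' bad components already needs the barricade argument.

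\textbf{Finite adhesion at limit stages of the main chain.} Your argument that $\mathring{\downcl{t_D}}$ is captured by some earlier $T_j$ works only when $\cf(\lambda)\geq\omega_1$; for $\cf(\lambda)=\omega$ the countable set $\mathring{\downcl{t_D}}$ can be spread cofinally over the $T_j$, and there is nothing to ``arrange'' about the $\mu_i$ that fixes this (the $\mu_i$ are sizes, not the cofinalities of the limit indices $\lambda<\cf(\kappa)$). The paper handles this by a case split on $\kappa$. For regular $\kappa$ it exploits the countable colouring number of the contracted $T$-graph $\dot G$ directly: each $T_i$ is built (interleaving applications of the Closure Lemma with taking down-closures in a witnessing well-order $\dot{\leq}$) to be an initial segment of $\dot{\leq}$, so that a bad node $t_D$ at a limit would be a $T$-limit with infinitely many $\dot{\leq}$-smaller $\dot G$-neighbours, which is impossible. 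For singular $\kappa$ it uses a two-parameter array $(T_{i,j})_{i<\cf(\kappa),\,j<\omega_1}$ and sets $T_i=\bigcup_{j<\omega_1}T_{i,j}$, so that each $T_i$ is itself an $\omega_1$-union of finite-adhesion subtrees and therefore inherits finite adhesion by exactly the uncountable-cofinality argument you sketched.
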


Given the decomposition lemma together with our notion of well-connected adhesion sets, the proof of our main Theorem~\ref{thm_main2} will be surprisingly simple;w see Section~\ref{sec_finalproof} below. The upcoming subsections contain all ingredients for the proof of the decomposition lemma.

\subsection{Concrete obstructions}

In our proof, we will use that the following three types of graphs have uncountable colouring number and therefore cannot appear as $U$-rooted minors of $G$:

\begin{enumerate}[label=(\roman*)]
	\item\label{barricades} A \textbf{barricade}, i.e.\ a bipartite graph with bipartition $(A,B)$ such that $|A| < |B|$ and every vertex of $B$ has infinitely many neighbours in $A$, cf.\ \cite[Lemma~2.4]{TheColouringNumberOfInfiniteGraphs}.
    \item\label{omega1-graphs} An \textbf{$\omega_1$-graph}, i.e.\ a $T$-graph for $T = \omega_1$, the first uncountable ordinal cf.\ \cite[Proposition~3.5]{NSTandAronszajntrees}.
	\item\label{AT-graphs} An \textbf{Aronszajn tree-graph}, i.e.\ a $T$-graph for an Aronszajn tree $T$, cf.\ \cite[Theorem~7.1]{NSTandAronszajntrees}.
\end{enumerate}

\begin{lemma}
\label{lem_noOmega_1chains}
    Let $U \subseteq V(G)$ be a set of vertices in a connected graph $G$, and suppose that every $U$-rooted minor of $G$ with countable branch sets has countable colouring number.
    
    Then all branches of a slim $U$-rooted normal semi-partition tree $(T,\cV)$ in $G$ are at most countable; in particular, all branch sets $V_t$ in $G$ are countable. 
\end{lemma}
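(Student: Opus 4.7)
The plan is to prove the lemma by contradiction, invoking obstruction \ref{omega1-graphs}: I would assume that $T$ has an uncountable branch, extract a sub-chain of order type $\omega_1$, and contract the corresponding bags to produce a $U$-rooted minor of $G$ with countable branch sets that is an $\omega_1$-graph, contradicting the assumption.

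Suppose for a contradiction that some maximal chain $B$ of $T$ is uncountable. Every chain in an order tree is well-ordered, so $B$ has order type at least $\omega_1$; let $C \subseteq B$ be its initial segment of order type $\omega_1$. For each $t \in C$, the chain $\lceil t \rceil$ is contained in $B$ by maximality of $B$, and since $C$ is an initial segment of $B$ with $t \in C$, actually $\lceil t \rceil \subseteq C$. Consequently $\height(t)$ is a countable ordinal, and slimness yields $|V_t| \leq |\height(t)| + \aleph_0 = \aleph_0$.

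Next, I would form the minor $H$ of $G$ by taking the induced subgraph $G\bigl[\bigcup_{t \in C} V_t\bigr]$ and contracting each $V_t$, which is connected by \ref{item_SNPT2}. The branch sets are pairwise disjoint by \ref{item_SNPT0}, countable by the previous paragraph, and each meets $U$ since $T$ is $U$-rooted; hence $H$ is a $U$-rooted minor of $G$ with countable branch sets. The heart of the argument is then to verify that $H$ is a $C$-graph, where $C$ is viewed as an order tree of type $\omega_1$. Edges in $H$ correspond precisely to edges of $\dot{G}$ between nodes of $C$; by \ref{item_SNPT1} such edges join nodes comparable in $T$ and hence in $C$. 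Moreover, because $\mathring{\lceil t \rceil}_T \subseteq C$ for every $t \in C$, the lower neighbours of $t$ in $\dot{G}$ already lie in $C$ and coincide with those in $H$, so the cofinality condition witnessing that $\dot{G}$ is a $T$-graph transfers directly to a cofinality condition in $\mathring{\lceil t \rceil}_C$. This makes $H$ an $\omega_1$-graph and contradicts obstruction \ref{omega1-graphs}.

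For the \emph{in particular} clause, any $\lceil t \rceil$ extends (by Zorn's lemma) to a maximal chain, which is now known to be countable; so $|\lceil t \rceil| \leq \aleph_0$, $\height(t)$ is a countable ordinal, and slimness once again forces $|V_t| \leq \aleph_0$. The only subtle step I anticipate is the transfer of the cofinality condition when restricting from $\dot{G}$ to the subgraph on $C$, which is ultimately resolved by the observation that the entire down-closure of every $t \in C$ already lies in $C$.
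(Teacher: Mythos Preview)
Your argument is correct and follows exactly the same line as the paper's proof: assume an uncountable branch, pass to an initial segment of order type $\omega_1$, and observe that the resulting contraction minor is a $U$-rooted $\omega_1$-graph with countable branch sets (by slimness), contradicting obstruction~\ref{omega1-graphs}. The paper's proof is a one-sentence sketch of precisely this; you have simply spelled out the details (down-closures stay inside $C$, so the cofinality condition for $\dot G$ transfers to $H$), and your handling of the \emph{in particular} clause via extending $\lceil t\rceil$ to a branch is the obvious unpacking of the paper's assertion that $\height(t)<\omega_1$.
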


\begin{proof}
    If $T$ contains an uncountable branch, then an initial segment of order type $\omega_1$ would form an $\omega_1$-graph minor in $G$ that is $U$-rooted and has countable branch sets (since $T$ is slim). This contradicts item \ref{omega1-graphs} above. 
    
    In particular, $\height(t) < \omega_1$ for all $t \in T$, and so the second assertion follows from the property that our normal semi-partition tree is slim.
\end{proof}

\begin{lemma}\label{barricade}
    Let $G$ be a graph. Let $U \subseteq V(G)$ be a set of vertices of $G$. Let $G$ have a minor with countable branch sets that is a barricade with bipartition $(A, B)$ such that the $B$-side is $U$-rooted, i.e. for every vertex $b \in B$ the corresponding branch set in $G$ contains a vertex of $U$. Then there is a $U$-rooted barricade of $G$ with countable branch sets.
\end{lemma}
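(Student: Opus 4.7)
My plan is to transform the $A$-side of the given barricade minor: for each $a$ in a suitably chosen subset $A^* \subseteq A$, I will select an adjacent $B$-vertex $\phi(a)$ and replace the branch set $X_a$ by the merged set $X_a \cup X_{\phi(a)}$. The merged set inherits a vertex of $U$ from $X_{\phi(a)}$, since the $B$-side is $U$-rooted by hypothesis. The challenge is to pick $\phi$ injectively so that the merged branch sets remain countable and pairwise disjoint, while leaving enough $B$-vertices (namely $B^* := B \setminus \phi(A^*)$) to form the new $B$-side with infinite degree into $A^*$.

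I would construct $A^*$ and $\phi$ by transfinite recursion: enumerate $A = \{a_\alpha : \alpha < \kappa\}$ with $\kappa = |A|$ (noting that $\kappa$ must be infinite, since each $b \in B$ has infinitely many $A$-neighbours). At stage $\alpha$, letting $B^{\mathrm{used}}_\alpha \subseteq B$ denote the set of $\phi$-values already assigned (of size at most $|\alpha| < \kappa < |B|$), either pick some $\phi(a_\alpha) \in N(a_\alpha) \cap (B \setminus B^{\mathrm{used}}_\alpha)$ and add $a_\alpha$ to $A^*$, or skip $a_\alpha$ if no such neighbour exists. The key observation driving the proof is that for every $b \in B^* = B \setminus \phi(A^*)$, we have $N(b) \cap A \subseteq A^*$: if some $a \in N(b) \cap A$ were skipped at its stage $\alpha(a)$, then $N(a) \cap B \subseteq B^{\mathrm{used}}_{\alpha(a)} \subseteq \phi(A^*)$, so $b \in N(a)$ would force $b \in \phi(A^*)$, contradicting $b \in B^*$. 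Hence each $b \in B^*$ retains its infinite degree into $A^*$.

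Finally, defining $Y_a := X_a \cup X_{\phi(a)}$ for $a \in A^*$ and $Y_b := X_b$ for $b \in B^*$, I obtain pairwise disjoint (by injectivity of $\phi$), connected (using the $G$-edge witnessing $\phi(a) \in N(a)$), countable branch sets that are all $U$-rooted. The cardinality condition $|A^*| \leq |A| < |B| = |B^*|$ holds by infinite cardinal arithmetic, and by injectivity of $\phi$, distinct $A^*$-neighbours of any $b \in B^*$ correspond to distinct $Y_a$; thus the bipartite subgraph between $A^*$ and $B^*$ in the induced minor is the desired $U$-rooted barricade of $G$ with countable branch sets. The main obstacle is identifying the right recursive rule --- once the choice ``take a fresh $B$-vertex'' is in place, the key observation and everything else follows essentially automatically.
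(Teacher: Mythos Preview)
Your proof is correct and follows the same core idea as the paper's: match each $A$-vertex with a distinct $B$-neighbour and contract, so that every new branch set inherits a $U$-vertex from the $B$-side. The paper streamlines this by first invoking \cite[Lemma~2.4]{TheColouringNumberOfInfiniteGraphs} to pass to a sub-barricade in which every $a\in A$ has more than $|A|$ neighbours in $B$; then a full matching of $A$ into $B$ exists by a one-line greedy argument, and contracting it immediately yields the $U$-rooted barricade. Your version is more self-contained: you build only a maximal partial matching $\phi$ and compensate with the key observation that any $b\in B^*$ can only see matched $A$-vertices (since otherwise $b$ would have been available when its unmatched neighbour was skipped). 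Both arguments are short; the paper's trades a citation for simplicity, while yours avoids the external reduction at the cost of one extra paragraph.
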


\begin{proof}
    Let $G$ be a graph. Let $U \subseteq V(G)$ be a set of vertices of $G$. Let $H$ be a barricade minor of $G$ with countable branch sets with bipartition $(A, B)$ such that the $B$-side is $U$-rooted. With \cite[Lemma 2.4]{TheColouringNumberOfInfiniteGraphs} we may assume that every vertex $a \in A$ has more than $\cardinality{A}$ many neighbours in $B$. From this degree condition, it readily follows that there is a matching $M$ of $A$ in $H$. By contracting all edges of $M$ in $H$, we obtain a barricade minor $H'$ of $H$ with branch sets of size $2$ such that every branch set of $H'$ contains a vertex of $B$; so the minor $H'$ of $G$ has countable branch sets and is now $U$-rooted as desired.
\end{proof}

\subsection{The closure lemma}

\begin{lemma}[Closure Lemma]
\label{3.7}
    Let $U \subseteq V(G)$ be a set of vertices in a connected graph $G$. Suppose that every $U$-rooted minor of $G$ with countable branch sets has countable colouring number. Let $(T,\cV)$ be a slim $U$-rooted normal semi-partition tree in $G$ with $U \subseteq G(T)$. Then every infinite set $X \subset T$ is included in a rooted subtree $T' \subseteq T$ of size $\cardinality{X} = \cardinality{T'}$ such that $G(T')$ has finite adhesion in $G$ towards $U$.
\end{lemma}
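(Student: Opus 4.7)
The plan is to construct $T'$ as a countable increasing union $T' = \bigcup_{n \in \NN} T'_n$ of down-closed subtrees of $T$, each of size at most $|X|$, via a closure argument in the spirit of Robertson, Seymour and Thomas.

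I would set $T'_0 := \lceil X \rceil_T$, which by \cref{lem_noOmega_1chains} has size at most $|X| \cdot \aleph_0 = |X|$. At step $n \to n+1$, for each \emph{bad} $U$-component $D$ of $G - G(T'_n)$ -- one for which $|N(D)| \geq \aleph_0$ -- I would pick a witness $u_D \in D \cap U$ according to a prescribed rule (say the $\leq_U$-least element of $D \cap U$ for a fixed well-ordering $\leq_U$ of $U$) and put
\[
T'_{n+1} := T'_n \cup \bigcup_{D \text{ bad}} \lceil t(u_D) \rceil_T,
\]
which is again down-closed.

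The quantitative heart of the argument will be a cardinality bound of $|T'_n|$ on the number of bad $U$-components at stage $n$. If there were more, I could extract a minor of $G$ with countable branch sets whose $A$-side comprises the singletons in $V(G(T'_n))$ and whose $B$-side is indexed by the bad components -- each represented by a countable connected subgraph of $D$ containing $u_D$ together with one $D$-side neighbour of each vertex in some fixed infinite subset of $N(D)$. This would be a bipartite graph with $|A| \leq |T'_n| < |B|$ in which every $b \in B$ has infinitely many neighbours in $A$, i.e.\ a barricade whose $B$-side is $U$-rooted. \cref{barricade} would upgrade this to a $U$-rooted barricade minor of $G$ with countable branch sets, contradicting the hypothesis by obstruction \ref{barricades}. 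It then follows inductively that $|T'_n| \leq |X|$, and hence $|T'| \leq \aleph_0 \cdot |X| = |X|$.

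The main obstacle will be to verify that $T'$ has finite adhesion towards $U$. Supposing for contradiction that $D$ is a $U$-component of $G - G(T')$ with $|N(D)| \geq \aleph_0$, \cref{lem_Tgraphproperties}~(4) yields a unique minimal $t \in T \setminus T'$ with $D \cap G(T) = G(\lfloor t \rfloor_T)$, and the $T$-graph property together with \cref{lem_Tgraphproperties}~(1) forces $N(D) \subseteq G(\mathring{\lceil t \rceil}_T) \subseteq G(T')$. Setting $t_n := \min(\lceil t \rceil_T \setminus T'_n)$ and letting $D_n \supseteq D$ denote the $U$-component of $G - G(T'_n)$ containing $D$, one first shows $t_n < t$ throughout (else $D_n$ is bad at stage $n$ and $t$ gets added at stage $n+1$), whence $t$ must be a limit and $t_n \nearrow t$ cofinally in $\mathring{\lceil t \rceil}_T$; in particular $D_n$ is bad for infinitely many $n$. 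The delicate point is to exploit the identity $\bigcap_n D_n = D$ together with the witness-selection rule: for sufficiently large bad stages $n$, all $\leq_U$-predecessors of $u^* := \min_{\leq_U}(D \cap U)$ will have left $D_n$, so $u_{D_n} = u^* \in D$, which forces $t(u_{D_n}) \geq t$ and therefore $t \in \lceil t(u_{D_n}) \rceil_T \subseteq T'_{n+1}$, the sought contradiction.
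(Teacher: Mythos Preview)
Your cardinality analysis via the barricade obstruction is correct and mirrors the paper's argument. The finite-adhesion step, however, contains a genuine gap that does not appear to be repairable within an $\omega$-length construction.

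The problem is the claim that for sufficiently large bad $n$ one has $u_{D_n} = u^*$. In the case $t_n < t$ for all $n$ this \emph{never} happens: once $D_n$ is bad you add $\lceil t(u_{D_n}) \rceil$ to $T'_{n+1}$, so $u_{D_n} \in G(T'_{n+1}) \subseteq G(T')$ and hence $u_{D_n} \notin D$; in particular $u_{D_n} \neq u^*$. What you do obtain is a strictly $\leq_U$-increasing sequence $(u_{D_n})$ over the bad stages, all strictly below $u^*$ --- but this is perfectly compatible with $\leq_U$ being a well-order (think of $0 <_U 1 <_U 2 <_U \cdots <_U \omega = u^*$). Put differently: while each \emph{individual} $\leq_U$-predecessor of $u^*$ eventually leaves the $D_n$'s (this follows from $\bigcap_n D_n \cap U = D \cap U$, which is the only part of ``$\bigcap_n D_n = D$'' you actually need and which does hold, since $U \subseteq G(T)$ and $\bigcap_n D_n \cap G(T) = G(\lfloor t \rfloor)$), there may be infinitely many such predecessors sitting on side-branches of $T$ above the various $t_n$, and nothing forces them all to have left at a common finite stage.

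This is precisely why the paper runs the closure for $\omega_1$ steps rather than $\omega$, adding just the minimal node $t_D$ at each stage: for a persisting bad $U$-component $D$ of $G - G(T')$ one then finds $i_0 < \omega_1$ with $|N(D) \cap G(T_{i_0})| = \infty$, so that the components $D_i \supseteq D$ are bad for all $i \geq i_0$ and the nodes $\{t_{D_i} : i_0 \leq i < \omega_1\}$ form an uncountable chain in $T$, contradicting \cref{lem_noOmega_1chains}. The price of the $\omega_1$-length construction is that for countable $X$ one must separately bound $|T'|$, which the paper does via the Aronszajn-tree obstruction~\ref{AT-graphs}. Your $\omega$-step approach would sidestep that extra case nicely on the cardinality side, but at the cost of losing the adhesion argument.
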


This lemma is a local analogue of \cite[Lemma 3.7]{Pitz}. Since we must construct barricades and Aronszajn trees that are $U$-rooted -- which differs from the proof in \cite[Lemma 3.7]{Pitz} --  we will give the entire proof here for convenience of the reader.

\begin{proof}
    For a connected subgraph $D \subset G$ with $D \cap U \neq \emptyset$ write $t_D$ for the by Lemma~\ref{lem_Tgraphproperties}(\ref{itemT2}) unique $T$-minimal element such that $V_{t_D}$ meets $D$. We recursively build a $\subseteq$-increasing sequence $\{T_i \colon i < \omega_1\}$ of rooted subtrees of $T$ by letting $T_0 = \lceil X \rceil_T$, defining $$ T_{i+1} = T_i \cup \{t_D \colon D \text{ a $U$-component of } G - G(T_i) \text{ with } |N(D) \cap G(T_i)| = \infty\}$$ at successor steps, and $T_\ell = \bigcup_{i < \ell} T_i$ for limit ordinals $\ell < \omega_1$. Finally we set $T' = \bigcup_{i < \omega_1} T_i$. Clearly, $T'$ is a rooted subtree of $T$ including $X$. 
    
    To see that $G(T')$ has finite adhesion in $G$ towards $U$, suppose for a contradiction that there is a $U$-component $D$ of $G-G(T')$ with $| N(D) \cap G(T') | = \infty$. Then there is some $i_0 < \omega_1$ such that $|N(D) \cap G(T_{i_0})| = \infty$. Hence for all $i_0 \leq i < \omega_1$, the unique component $D_i$ of $G - G(T_i)$ containing $D$ is also a $U$-component that satisfies $|N(D_i) \cap G(T_i)| = \infty$. Then $\{t_{D_i} \colon i_0 \leq i < \omega_1\}$ forms an uncountable chain in $T$, contradicting Lemma~\ref{lem_noOmega_1chains}.
    
    To see that $|T'| = |X|$, observe that since $T$ contains no uncountable chains by Lemma~\ref{lem_noOmega_1chains}, we have $|T_0| = |X|$. We now prove by transfinite induction on $i< \omega_1$ that $|T_i| = |X|$. The cases where $i$ is a limit are clear, so suppose $i = j+ 1$. By the induction hypothesis, $|T_j| = |X|$. Suppose for a contradiction that $|T_{j+1}|>|T_j|$. We construct a barricade minor of $G$ with countable branch sets such that the $B$-side is $U$-rooted as follows: Define
    $$
    A := V(T_j).
    $$
    By the in-particular part in Lemma~\ref{lem_noOmega_1chains} we have $|A| = |T_j|$. For $B$, consider all $U$-components $D$ of $G - G(T_j)$ with $t_D \in T_i - T_j$. By definition of $t_D$ it is true that $\cardinality{N(D)} = \infty$. Let $N \subseteq N(D)$ be a countable subset of $N(D)$. Find for every $n \in N$ a neighbour $d_n \in N(n) \cap D$. Also, let $u_D \in U \cap D$. Then, $u_D$ and all $d_n$ are at most countable many vertices in $D$. Find a tree $T_D$ in $D$ of countable size that contains these vertices. Moreover, $|T_i|  > |T_j| = |A|$ implies that $|B| > |A|$, so we have found a barricade minor with countable branch sets whose $B$-side is $U$-rooted. Using \cref{barricade}, we also find a barricade as a minor of $G$ with countable branch sets such that both sides are $U$-rooted, contradicting \ref{barricades} above.

    If $X$ is uncountable, then $|T'| = | \bigcup_{i < \omega_1} T_i | = \aleph_1 \cdot |X| = |X|$. So suppose for a contradiction that $X$ is countable and $|T'| = \aleph_1$. Contracting the countable rooted subtree $T_0$ to a vertex $r$ in $T'$ gives rise to an order tree $T''$ with root $r$. Since $T_0 \subset T'$ is a rooted subtree and so $G(T_0)$ is connected, this contraction results in a minor $G''$ of $G(T)$ with countable branch sets that is a $T''$ graph. By construction, nodes in $T_{i} \setminus \bigcup_{j < i} T_j$ for $i \geq 1$ belong to the $i$th level of $T''$, and hence all levels of $T''$ are countable. Finally, since $T''$ like $T'$ and $T$ contains no uncountable chains, it follows that $T''$ is an Aronszajn tree. Since $G'' \preceq G(T)$, we have found an Aronszajn tree $U$-rooted minor of $G$ with countable branch sets, a contradiction.
\end{proof}

\subsection{Proof of the decomposition lemma}

The proof of this decomposition lemma is now identical to the corresponding proof in \cite{Pitz} once we replace \cite[Lemma~3.7]{Pitz} by our new Closure Lemma~\ref{3.7}. For convenience of the reader, we reprint the full argument in the appendix.

\section{The hard implication}
\label{sec_finalproof}

\begin{thm}
    Let $G$ be a connected graph. Let $U \subseteq V(G)$ be a set of vertices of $G$. Then, $U$ is normally spanned in $G$ if and only if all $U$-rooted minors in $G$ with countable branch sets have countable colouring number.
\end{thm}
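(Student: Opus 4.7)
The plan is to prove the backward direction by strong induction on $|U|$; the forward direction was established in Section~\ref{sec_easy}.

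For the base case $|U| \leq \aleph_0$, we apply Theorem~\ref{3.4} to obtain a slim $U$-rooted normal semi-partition tree $(T,\cV)$ with $U \subseteq V(G(T))$ and $G(T)$ having well-connected adhesion sets in $G$. Since $|T| \leq |U| \leq \aleph_0$ and $(T,\cV)$ is slim, $V(G(T))$ is countable, so the countable connected graph $G(T)$ admits a normal spanning tree $S$; by Lemma~\ref{lem_normalinsubgraphs}, $S$ is also normal in $G$, and $U \subseteq V(G(T)) = V(S)$.

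For the inductive step with $|U|$ uncountable, we combine Theorem~\ref{3.4} with the Decomposition Lemma~\ref{3.6} to obtain $(T,\cV)$ and a continuous, strictly increasing union $T = \bigcup_{i < \cf(|T|)} T_i$ with $|T_i| < |T| = |U|$, each $G(T_i)$ having well-connected adhesion in $G$ and finite adhesion in $G$ towards $U$. Any $(U \cap V(G(T_i)))$-rooted minor of $G(T_i)$ with countable branch sets is, as a minor of $G$, also a $U$-rooted minor of $G$ with countable branch sets; the induction hypothesis applied to $(G(T_i), U \cap V(G(T_i)))$, valid since $|U \cap V(G(T_i))| \leq |V(G(T_i))| \leq |T_i| < |U|$, then yields that $U \cap V(G(T_i))$ is normally spanned in $G(T_i)$, and Lemma~\ref{lem_normalinsubgraphs} together with the well-connected adhesion of $G(T_i)$ upgrades the witnessing tree to a normal tree in $G$.

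We then build a nested continuous chain of normal trees $(S_i)_{i \leq \cf(|T|)}$ in $G$ satisfying $V(S_i) \subseteq V(G(T_i))$ and $U \cap V(G(T_i)) \subseteq V(S_i)$, so that $S := \bigcup_i S_i$ is the required normal tree in $G$ containing $U$. Limit stages pose no difficulty, since any $S_\ell$-path in $G$ is already an $S_i$-path for some $i < \ell$, to which the normality of $S_i$ applies. At a successor step $i \mapsto i+1$, for each component $D$ of $G(T_{i+1}) - V(S_i)$ meeting $U$ we apply the induction hypothesis to $(G[D], D \cap U)$ (noting that $|D| \leq |V(G(T_{i+1}))| < |U|$) to obtain a normal tree $R_D$ in $G[D]$ covering $D \cap U$, and attach $R_D$ to $S_i$ via an edge from the maximum of the chain $N_{G(T_{i+1})}(D) \cap V(S_i)$ in $S_i$ to the root of $R_D$. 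A routine case analysis of $S_{i+1}$-paths in $G(T_{i+1})$ then shows $S_{i+1}$ is normal in $G(T_{i+1})$, and hence in $G$ by Lemma~\ref{lem_normalinsubgraphs} applied to $G(T_{i+1})$.

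The main obstacle is verifying that the attachment chain $N_{G(T_{i+1})}(D) \cap V(S_i)$ is finite, so that the needed maximum exists. The inclusion $U \cap V(G(T_i)) \subseteq V(S_i)$ forces the $U$-vertices of $D$ to all lie in $V(G(T_{i+1})) \setminus V(G(T_i))$, placing each inside a $U$-component of $G - V(G(T_i))$ whose neighbourhood in $V(G(T_i))$ is finite by the finite-adhesion property of $G(T_i)$ towards $U$ provided by Decomposition Lemma~\ref{3.6}. A careful analysis using the semi-partition tree structure of $(T,\cV)$ and the well-connected adhesion of $G(T_i)$ to control how $D$ can extend through $V(G(T_i)) \setminus V(S_i)$ then yields the required finiteness and completes the inductive construction.
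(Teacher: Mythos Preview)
Your overall plan---Theorem~\ref{3.4}, then the Decomposition Lemma~\ref{3.6}, then glue via Lemma~\ref{lem_normalinsubgraphs}---matches the paper. The divergence, and the gap, is in what you carry through the induction: you build normal trees $S_i$ that merely \emph{cover} $U\cap V(G(T_i))$, whereas the paper strengthens the inductive statement and builds normal \emph{spanning} trees of $G(T_i)$. This difference is exactly what breaks your successor step.

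With $S_i$ spanning $G(T_i)$, every component $D$ of $G(T_{i+1})-V(S_i)=G(T_{i+1})-G(T_i)$ equals some $G(\lfloor t\rfloor)$ with $t$ minimal in $T_{i+1}\setminus T_i$ (Lemma~\ref{lem_Tgraphproperties}\ref{itemT3}); since $T$ is $U$-rooted, $D$ meets $U$, so the finite-adhesion-towards-$U$ clause immediately gives $|N(D)|<\infty$. In your setup, by contrast, $V(G(T_i))\setminus V(S_i)$ may be nonempty (none of these vertices lie in $U$), and a component $D$ of $G(T_{i+1})-V(S_i)$ can spread through this leftover set. Nothing you have invoked prevents $D\cap V(G(T_i))$ from having infinitely many neighbours along a ray of $S_i$: finite adhesion towards $U$ only controls neighbourhoods of $U$-components of $G-G(T_i)$, and well-connected adhesion of $G(T_i)$ only concerns $G(T_i)$-paths through $G\setminus G(T_i)$; neither speaks to how $S_i$ sits inside $G(T_i)$. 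The sentence ``a careful analysis \ldots\ then yields the required finiteness'' is precisely the step that does not go through, and the proposal gives no argument for it.

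The remedy is to do what the paper does: prove by induction on $|T|$ that $G(T)$ has a normal \emph{spanning} tree. Then at the successor step the components $D$ are automatically the $G(\lfloor t\rfloor)$, each meeting $U$, and the finiteness is immediate; Jung's theorem then lets you root the normal spanning tree of each $D$ at a chosen neighbour of the top of the (now finite) chain $N(D)$.
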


\begin{proof}
    In remains to prove the backwards direction. Let $U \subseteq V(G)$ be a set of vertices in a connected graph $G$, and suppose that every $U$-rooted minor of $G$ with countable branch sets has countable colouring number. Fix a slim normal semi-partition tree $(T,\cV)$ in $G$ as in Theorem~\ref{3.4}. We prove by induction on $\kappa=|T|$ that the subgraph $G(T)$ of $G$ has a normal spanning tree. Then we are done, since by Lemma~\ref{lem_normalinsubgraphs}, this tree is also normal in $G$ (and covers $U$ by property~\ref{3.4.0} Theorem~\ref{3.4}).
    
    If $\kappa$ is countable, then $G(T)$ is countable (since $T$ is slim), and hence has a normal spanning tree.
    Now suppose that $\kappa$ is uncountable, and the theorem holds for all smaller cardinals. By the Decomposition Lemma~\ref{3.6}, we get a continuous increasing chain $\set{T_i}{i < \sigma}$ of infinite, $< \kappa$-sized rooted subtrees $T_i$ of $T$ such that each $G(T_i)$ has well-connected adhesion sets (by property~\ref{3.4.2} in Theorem~\ref{3.4}) and has finite adhesion in $G$ towards $U$.
    
    We construct by recursion on $i < \sigma$ a sequence of normal trees $\set{S_i}{i < \sigma}$ in $G$ all with the same root and all extending each other, so that each $S_i$ is a normal spanning tree of $G_i :=G(T_i)$.
     In the end, define
    $$
    S := \bigcup_{i < \sigma} S_i.
    $$
    Then clearly, $S$ is the desired normal spanning tree of $G(T)$.
    
    It remains to describe the recursive construction. First, let $i = 0$. Since $\cardinality{T_0} < \kappa$, we find a normal spanning tree $S_0$ of $G_0$. Since $G_0$ has well-connected adhesion sets, it follows from Lemma~\ref{lem_normalinsubgraphs} that $S_0$ is normal in $G$.

    Next, suppose that $i = \l$ is a limit. Define
    $
    S_\l := \bigcup_{j < \l}S_j.
    $
    Then $S_\l$ is a normal tree in $G$ extending all $S_j$ for $j < \l$. Also, $V(S_\l) = V(G_\l)$, so $S_\l$ is a normal spanning tree of $G(T_\l)$.

    For the successor step, suppose that for some $i < \sigma$ we have already defined a normal spanning tree $S_i$ of $G_i$ that is normal in $G$. In order to construct $S_{i+1}$, consider a component $D$ of $G_{i+1} - G_i$. Since $S_i$ is normal in $G$,  it follows that $N(D)$ lies on a chain of $S_i$. Since $G_i$ has finite adhesion towards $U$, and $D$ meets $U$ (as $T$ is $U$-rooted), this chain $N(D)$ is finite. Thus there exists a maximal element $s_{D} \in N(D)$ in the tree order of $S_i$. Choose a neighbour $r_{D}$ of $s_{D}$ in $D$, and write $e_{D}$ for the edge $s_{D}r_D$.

    By induction assumption, $G_{i+1}$ has a normal spanning tree. So, by Theorem~\ref{thm_Jung} there is a normal spanning tree $S_{D}$ of $D$ with prescribed root $r_{D}$. Then
    $$
    S_{i+1} := S_i \cup \bigcup_{D} S_{D} \cup \bigcup_{D} \singleset{e_{D}}
    $$
    is a normal spanning tree of $G_{i+1}$. The construction is complete.
\end{proof}

\bibliographystyle{plainurl}
\bibliography{Ref}

\appendix

\section{Proof of the decomposition lemma}

For completeness, we provide the proof of the Decomposition Lemma~\ref{3.6}. The proof distinguishes two cases depending on whether $\kappa = |T|$ is regular or singular.

\begin{proof}[Proof of Lemma~\ref{3.6} for regular uncountable $\kappa$]
    Since $\dot{G}$ is a $U$-rooted minor of $G$ with countable branch sets (Lemma~\ref{lem_noOmega_1chains}), it has countable colouring number. Fix a well-order $\dot{\leq}$ of $V(\dot{G})$ so that every vertex has only finitely many neighbours preceding it in $\dot{\leq}$. We may choose $\dot{\leq}$ to be of order type $|\dot{G}|$, see e.g.\ \cite[Corollary~2.1]{EGJKP19}. 
    
    Fix an enumeration $V(T)=\{t_i \colon i < \kappa\}$. We recursively define a continuous increasing sequence $\{T_i \colon i < \kappa\}$ of rooted subtrees of $T$ with 
    
    \begin{enumerate}
    	\item $t_i \in T_{i+1}$ for all $i < \kappa$,
    	\item each $G(T_i)$ has finite adhesion in $G$ towards $U$,
    	\item the vertices of $T_i$ form a proper initial segment of $(V(\dot{G}),\dot{\leq})$
    \end{enumerate}
    
    Let $T_0 = \emptyset$. In the successor step, suppose that $T_i$ is already defined. Let $T_i^0 := T_i \cup \lceil t_i \rceil$. At odd steps, use the Closure Lemma~\ref{3.7} to fix  a rooted subtree $T_i^{2n+1}$ of $T$ including $T_i^{2n}$ of the same size as $T_i^{2n}$ so that $G(T_i^{2n+1})$ has finite adhesion in $G$ towards $U$. At even steps, let $T_i^{2n+2}$ be the smallest subtree of $T$ including the down-closure of $T_i^{2n+1}$ in $(V(\dot{G}),\dot{\leq})$.Define $T_{i+1} = \bigcup_{n \in \NN}T_i^n$. By construction, $T_{i+1}$ is a rooted subtree of $T$ with $t_i \in T_{i+1}$, and $T_{i+1}$ forms an initial segment of $(V(\dot{G}),\dot{\leq})$. To see that this initial segment is proper, one inductively verifies that $|T_i^{n}| < \kappa$; since $\kappa$ has uncountable cofinality, this also gives $|T_{i+1}| < \kappa$. 

    Hence, it remains to show that $G(T_{i+1})$ has finite adhesion in $G$ towards $U$. Suppose otherwise that there exists a $U$-component $D$ of $G-G(T_{i+1})$ with infinitely many neighbours in $G(T_{i+1})$. If we let $d =t_D$, then $t(N(D)) \subset \mathring{\lceil d \rceil}_T$ holds by definition of a normal semi-partition tree. We claim that $d$ must be a limit of $T$. Indeed, for any $x <_T d$, Lemma~\ref{lem_Tgraphproperties}(\ref{itemT3}) implies that $x \in T_i^{2n+1}$ for some $n \in \NN$. Since $G(T_i^{2n+1})$ has finite adhesion towards $U$, it follows that $N(D) \cap G(T_i^{2n+1})$ is finite. In particular, only finitely many neighbours $v \in N(D)$ satisfy $t(v) \leq_T x$. Hence, at least one neighbour $v \in N(D)$ satisfies $x <_T t(v) <_T d$; so $d$ is a limit.

    By the definition of a $T$-graph, $d$ has infinitely many  $\dot{G}$-neighbours below it, and hence in $T_{i+1}$. However, since $T_{i+1}$ forms an initial segment in $(V(\dot{G}),\dot{\leq})$ not containing $d$, it follows that $d$ is preceded by infinitely many of its neighbours in $\dot{\leq}$, contradicting the choice of $\dot{\leq}$.

    For limits $\ell < \kappa$ we define $T_\ell = \bigcup_{i < \ell} T_i$. As above, $T_\ell$ is a rooted subtree of $T$ that forms a proper initial segment in $(V(\dot{G}),\dot{\leq})$ such that $G(T_\ell)$ has finite adhesion towards $U$.
\end{proof}

\begin{proof}[Proof of Lemma~\ref{3.6} for singular uncountable $\kappa$]
    Let us enumerate $V(T) = \{t_i \colon i < \kappa\}$ and fix a continuous increasing sequence $\{\kappa_i \colon i < cf(\kappa)\}$ of cardinals with limit $\kappa$, where $\kappa_0 > cf(\kappa)$ is uncountable.
    We build a family $$\{T_{i,j} \colon i < cf(\kappa), \; j < \omega_1\}$$ of rooted subtrees of $T$ with $G(T_{i,j})$ of finite adhesion in $G$ towards $U$, with each $T_{i,j}$ of size $\kappa_i$. This will be done by a nested recursion on $i$ and $j$. When we come to choose $T_{i,j}$, we will already have chosen all $T_{i',j'}$ with $j' < j$, or with both $j' = j$ and $i' < i$. Whenever we have just selected such a subtree $T_{i,j}$, we immediately fix an arbitrary enumeration $\set{t^k_{i,j}}:{k < \kappa_i}$ of this tree. We impose the following conditions on this construction:
    
    \begin{enumerate}
    	\item $\{t_k \colon k < \kappa_i\} \subset T_{i,0}$ for all $i$,
    	\item $\bigcup \{T_{i',j'} \colon i' \leq i, j' \leq j\} \subset T_{i,j}$ for all $i$ and $j$,
    	\item $\bigcup \{t^k_{i',j} \colon k < \kappa_i\} \subset T_{i,j+1}$ for all $i < i' < cf(\kappa)$ and $j$.
    \end{enumerate}
    
    These three conditions specify some collection of $\kappa_i$-many vertices which must appear in $T_{i,j}$. By Lemma~\ref{3.7} we can extend this collection to a subtree $T_{i,j}$ of the same size such that $G(T_{i,j})$ has finite adhesion in $G$ towards $U$. This completes the description of our recursive construction.

    Condition (3) ensures that
    
    \begin{enumerate}
    	\item[(4)] $T_{\ell,j} \subset \bigcup_{i < \ell} T_{i,j+1}$ for all limits $\ell < cf(\kappa)$ and all $j$.
    \end{enumerate}
    
    In fact, since $\kappa_\ell = \bigcup_{i < \ell} \kappa_i $ by the continuity of our cardinal sequence, it follows that $T_{\ell,j} = \{t^k_{\ell,j} \colon k < \kappa_\ell\} = \bigcup_{i < \ell} \{t^k_{\ell,j} \colon k < \kappa_i\} \subset \bigcup_{i < \ell} T_{i,j+1}$.
    Now for $i < cf(\kappa)$, the set $T_i = \bigcup_{j < \omega_1} T_{i,j}$ yields a subgraph $G(T_i)$ of finite adhesion in $G$ towards $U$ (as an increasing $\omega_1$-union of subgraphs of finite adhesion towards $U$ has itself finite adhesion towards $U$). In addition, the sequence $\{T_i \colon i < cf(\kappa)\}$ increases by (2) and is continuous by (4). 
\end{proof}

\end{document}